\documentclass[12pt]{article}

\usepackage{amssymb}
\usepackage{amsmath}
\usepackage{amsthm}
\usepackage{mathtools,xcolor,enumerate}
\usepackage[colorlinks=true,allcolors=black]{hyperref}
\usepackage[expansion=false]{microtype}

\newtheorem{theorem}{Theorem}
\newtheorem{proposition}{Proposition}[section]
\newtheorem{lemma}[proposition]{Lemma}

\theoremstyle{definition}
\newtheorem{definition}[proposition]{Definition}
\theoremstyle{remark}
\newtheorem{remark}[proposition]{Remark}

\newcommand{\R}{\mathbb{R}}
\newcommand{\C}{\mathbf{C}}
\newcommand{\cH}{\mathcal{H}}
\newcommand{\cL}{\mathcal{L}}
\newcommand{\Per}{\operatorname{Per}}
\newcommand{\diver}{\operatorname{div}}
\newcommand{\spt}{\operatorname{spt}}
\newcommand{\dist}{\operatorname{dist}}

\begin{document}

\title{Strict Stability and Strict Minimality of Regular Area-Minimizing Hypercones: A Quantitative Characterization}
\author{Gongping Niu}
\date{}
\maketitle

\begin{abstract}
Let $\C=\partial E\subset\R^{n+1}$ be a regular area-minimizing
hypercone. We first prove that $\C$ is simultaneously strictly stable and
strictly minimizing if and only if there exists $c_\C>0$ such that
\[
   \Per(F;B_R)-\Per(E;B_R)
    \geq
    c_\C
    \int_{F\mathbin\triangle E}
    \frac{\dist(x,\C)}{|x|^2}\,dx
\]
for every $R>0$ and finite-perimeter set $F$ with $F\mathbin\triangle E\Subset B_R$. Thus this intrinsic distance-weighted inequality
characterizes exactly the simultaneous strictness of the stability
and minimizing properties. Second, without either strictness
assumption, every regular area-minimizing hypercone satisfies the
scale-invariant quadratic inequality
\[
    \frac{\Per(F;B_R)-\Per(E;B_R)}{R^n}
    \geq
    c_\C
    \left(
      \frac{|F\mathbin\triangle E|}{R^{n+1}}
    \right)^2.
\]
This extends the inequality previously established for
area-minimizing Lawson cones. Finally, the area-minimizing assumption is unnecessary for our
spectral result: for every stable regular minimal hypercone, the first
Dirichlet eigenvalue $\lambda_\C^D(R)$ of the Jacobi operator on
$\C\cap B_R$ is given exactly by
\[
    \lambda_\C^D(R)
    =
    \frac{j_{b_1,1}^2}{R^2},
    \qquad
    b_1^2
    =
    \frac{(n-2)^2}{4}+\mu_1,
\]
where $\mu_1$ is the first eigenvalue of the link Jacobi operator and
$j_{b_1,1}$ is the first positive zero of the Bessel function
$J_{b_1}$. In particular, this identifies the optimal Dirichlet
spectral constant for every stable regular minimal hypercone.
\end{abstract}

\begingroup
\footnotesize
% \newpage
\tableofcontents
\endgroup
\newpage

\section{Introduction}
\label{sec:introduction}

Consider a geometric variational problem: we want to ask whether a competitor whose energy is close to the minimum is also geometrically close to an actual minimizer and, if so, at what quantitative rate. For
example, among sets of prescribed volume in $\R^{n+1}$, balls minimize the
perimeter. The sharp quantitative isoperimetric inequality states that the
normalized perimeter deficit controls from below the square of the Fraenkel
asymmetry (see \cite{fusco2008sharp,cicalese2012selection}). 

Unlike Euclidean isoperimetric minimizers, which are balls and hence smooth, area-minimizing hypersurface solutions of the Plateau problem may have singular sets of codimension at least $7$ within the hypersurface. \cite[Theorem~1.1]{inauen2018quantitative} proved a local quantitative minimality theorem for homologous integral currents sufficiently close in the flat norm to a smooth compact strictly stable critical submanifold of an elliptic
parametric functional. 

An analogous quantitative phenomenon occurs for the Plateau problem in the
smooth compact setting. Let $M$ be a smooth compact orientable minimal
hypersurface with smooth boundary, and assume that the integral current induced
by $M$ is the unique mass minimizer among integral currents with the same
boundary. \cite[Theorem~1]{de2014sharp} proved that
positivity of the first Dirichlet eigenvalue of the Jacobi operator $-(\Delta_M+|A_M|^2)$
is equivalent to a global quantitative stability inequality. More precisely,
let $M'$ be another smooth compact orientable hypersurface with the same
boundary as $M$, and let $A$ be a finite-volume Borel set whose reduced
boundary agrees with $M\mathbin\triangle M'$ up to an $\cH^n$-null set.  Their
estimate has the form
\[
  \cH^n(M')-\cH^n(M)
    \geq
    \kappa_M\min\left\{
        \cL^{n+1}(A)^2,
        \cL^{n+1}(A)^{n/(n+1)}
    \right\}.   
\]

Thus, in this setting, strict stability is equivalent to a global geometric estimate. 

The smooth compact equivalence above suggests asking whether an analogous
global characterization remains valid for singular area-minimizing
hypersurfaces.  For a regular area-minimizing hypercone, however, two
different nondegeneracy conditions arise.  Strict stability in the sense
of \cite{caffarelli1984minimal} is a spectral condition for the Jacobi
operator, whereas strict minimality in the sense of
\cite{hardt1985area} is a geometric condition (see definitions in Section~\ref{sec:preliminaries}).  

The principal question of this paper is whether these two conditions can
be captured simultaneously by a single global quantitative inequality.
We prove that this is indeed the case (Theorem~\ref{thm:weighted-characterization}).  We also show that area minimality
alone, without either strictness condition, implies an unweighted global
quadratic perimeter inequality (Theorem~\ref{thm:main}).

Let $n\geq2$, and let $E\subset\R^{n+1}$ be an open cone of locally finite
perimeter.  We assume that
\[
    \C=\partial E
\]
is a regular area-minimizing hypercone.  Thus $\C\setminus\{0\}$ is smooth,
and its link
\[
    S=\C\cap\mathbb S^n
\]
is a smooth embedded minimal hypersurface of $\mathbb S^n$.  The link $S$ is
connected and $\C$ separates
$\R^{n+1}$ into exactly two connected components (see \cite{hardt1985area}).  We orient $\C$ as the
boundary of $E$ and denote its measure-theoretic outer unit normal by
$\nu_E$.

For a set $F$ of locally finite perimeter and $R>0$, define the perimeter
deficit by
\[
    D_R(F)=\Per(F;B_R)-\Per(E;B_R).
\]
Since $E$ is locally perimeter minimizing, for  $F\mathbin\triangle E\Subset B_R$, we have $
    D_R(F)\geq 0.$

Quantitative inequalities for singular area-minimizing hypersurfaces have been established for the Lawson cones. 
\[
    M_{k,h}
    =
    \left\{(x,y)\in\R^k\times\R^h:
      \frac{|x|}{\sqrt{k-1}}
      =
      \frac{|y|}{\sqrt{h-1}}\right\},
    \qquad 2\leq k\leq h.
\]
These cones are area minimizing when $k+h\geq9$, and also when
$(k,h)=(3,5)$ or $(4,4)$.  By \cite[Theorem~5]{de2014sharp}, global
quadratic inequalities hold for every such cone except
\[
    (3,5),(2,7),(2,8),(2,9),(2,10),(2,11),
\]
up to interchanging the two factors.  By
\cite[Lemma~2 and Theorem~1]{liu2019stability}, the six remaining cases
also satisfy such inequalities. Taken together, these results establish
the quadratic inequality for every area-minimizing Lawson cone.

Those results for Lawson cones rely on their explicit geometry.  A general regular area-minimizing hypercone need not possess such symmetry. Nevertheless, the foliation theorem of Hardt and Simon (\cite{hardt1985area}) provides a canonical geometric substitute: in each component of
$\R^{n+1}\setminus\C$, the dilations of a smooth properly embedded
area-minimizing hypersurface form a foliation
\cite[Theorem~2.1]{hardt1985area}.  We use these two foliations to prove the
following estimate for every regular area-minimizing hypercone.

\begin{theorem}\label{thm:main}
There exists a constant $c_{\C}>0$ such that, for every $R>0$ and every set
$F\subset\R^{n+1}$ of locally finite perimeter satisfying
$F\mathbin\triangle E\Subset B_R$,
\begin{equation}\label{eq:main}
    \frac{D_R(F)}{R^n}
    \geq
    c_{\C}
    \left(
        \frac{\cL^{n+1}(F\mathbin\triangle E)}{R^{n+1}}
    \right)^2.
\end{equation}
\end{theorem}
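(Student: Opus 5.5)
By scaling invariance of both sides (the cone $E$ is dilation invariant, $D_R$ scales like $R^n$ and volume like $R^{n+1}$), we reduce to $R=1$. The plan is the calibration argument of \cite{de2014sharp} for Lawson cones, with the explicit symmetric foliation replaced by the Hardt--Simon foliations \cite[Theorem~2.1]{hardt1985area}. Let $\Sigma_\pm$ be the smooth area-minimizing leaves in the two components $E$ and $\R^{n+1}\setminus\overline E$. Their dilations $\{t\Sigma_\pm\}_{t>0}$, together with $\C$, foliate $\R^{n+1}\setminus\{0\}$. Let $X$ be the unit normal field of this foliation, oriented to agree with $\nu_E$ on $\C$. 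Each leaf is minimal, so $\diver X=0$ away from the origin, and $X$ is bounded and continuous off $0$ (Hardt--Simon give $C^{1}$ convergence of $t\Sigma_\pm$ to $\C$ as $t\to0$ away from $0$). By homogeneity, $X(x)=X(x/|x|)$ is $0$-homogeneous.

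The first key step is a ``calibration with deficit'' identity. For $F$ with $F\triangle E\Subset B_1$, the divergence theorem applied on $F\setminus E$ and $E\setminus F$ gives
\[
D_1(F)\ \ge\ \int_{\partial^*F\cap B_1}\bigl(1-X\cdot\nu_F\bigr)\,d\cH^n ,
\]
since $\int_{\partial^*F}X\cdot\nu_F=\int_{\partial^* E}X\cdot\nu_E=\Per(E;B_1)$. The origin is harmless because $X$ is bounded and $\{0\}$ has zero capacity for $n\ge2$. Using $1-X\cdot\nu_F=\tfrac12|X-\nu_F|^2$, this becomes
\[
D_1(F)\ge\tfrac12\int_{\partial^*F\cap B_1}|\nu_F-X|^2\,d\cH^n .
\]

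The second step, which I expect to be the main obstacle, converts this into a volume bound. I would use a second vector field $Y$ transverse to the foliation, for instance the position-type field $Y=\phi\,\partial_t$ that moves along the leaf parameter (the normalized variation field of the family $t\Sigma_\pm$, which points from $E$ into its complement). The quantity $\diver Y$ should be strictly positive on the region, or at least control $|Y|$, so that
\[
\int_{F\triangle E}\diver Y\,dx=\int_{\partial^*F}Y\cdot\nu_F\,d\cH^n-\int_{\partial^*E}Y\cdot\nu_E\,d\cH^n .
\]
Writing $Y\cdot\nu_F=Y\cdot X+Y\cdot(\nu_F-X)$, the term $Y\cdot X$ is handled by a second divergence identity, and the remainder is bounded by Cauchy--Schwarz:
\[
\|Y\|_{L^\infty(B_1)}\,\Per(F;B_1)^{1/2}\Bigl(\int|\nu_F-X|^2\Bigr)^{1/2}.
\]
The difficulty is arranging a good sign or lower bound for $\diver Y$. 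A cleaner alternative is to use the leaf-parameter function $u$, with $\C=\{u=0\}$ and $u$ $1$-homogeneous, so that $\nabla u=|\nabla u|X$. Then
\[
\int_{\partial^*F}u\,\nu_F\cdot X\ \ \text{versus}\ \ \int_{F\triangle E}\diver(uX)=\int_{F\triangle E}|\nabla u|\,dx .
\]
If $|\nabla u|\ge c>0$ off the origin, which follows from the Hardt--Simon asymptotics $t\Sigma_\pm\sim t^{\gamma}$-graphs over $\C$ with nonvanishing leading coefficient, we obtain $c\,|F\triangle E|\le\int_{\partial^*F\cap B_1}|u|\,|X-\nu_F|\,d\cH^n$. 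Cauchy--Schwarz then gives
\[
|F\triangle E|^2\le C\,\Per(F;B_1)\,D_1(F),
\]
using $|u|\le C$ on $B_1$. One must check that $u$ can be chosen $C^1$ off $0$ with $|\nabla u|$ bounded above and below, including near $\C$, where leaves approach the cone at rate $t^{\gamma}$. This is where the regularity of the foliation is needed, and possibly a reparametrization of leaves.

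Finally, we close the estimate. If $\Per(F;B_1)\le 2\Per(E;B_1)$, then $D_1(F)\ge c\,|F\triangle E|^2$ directly. Otherwise $D_1(F)\ge\Per(E;B_1)$, while $|F\triangle E|\le|B_1|$, so $D_1(F)\ge c\,|F\triangle E|^2$ holds trivially. Rescaling then yields \eqref{eq:main}.
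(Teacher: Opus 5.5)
\textbf{There is a genuine gap: the volume step you flag as the main obstacle is not resolved, and the route you propose for it fails.} The reduction to $R=1$, the calibration identity $D_1(F)=\frac12\int_{\partial^*F\cap B_1}|\nu_F-X|^2\,d\cH^n$ for the Hardt--Simon normal field $X$, and the closing case split all agree with the paper.

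\emph{Why the $uX$ route fails.} A $1$-homogeneous function constant on the leaves is, on each side, a multiple of the foliation parameter $\lambda_\pm$. The field $uX$ is therefore \emph{normal} to the leaves. Its flux is $\int_{\partial^*F}u\,X\cdot\nu_F\,d\cH^n=\int_{\partial^*F}u\,d\cH^n-\frac12\int_{\partial^*F}u\,|X-\nu_F|^2\,d\cH^n$, whose leading term is first order and unrelated to $|X-\nu_F|$. So the inequality $c\,\cL^{n+1}(F\mathbin\triangle E)\le\int_{\partial^*F\cap B_1}|u|\,|X-\nu_F|\,d\cH^n$ does not follow, and in fact it is false.

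\emph{A counterexample.} Let $\partial F$ coincide with the leaf $S_s$ in $B_{1/2}$. Glue it to $\C$ over $\{1/2\le|x|\le3/4\}$ by cutting off the height of $S_s$ over $\C$, which there is of order $\epsilon_s\to0$ (about $a s^{1-\gamma}$ in the power case). Then $\cL^{n+1}(F\mathbin\triangle E)\ge c\,\epsilon_s$. On the other hand, $\nu_F=X$ on the leaf part and $u=0$ on the cone part. On the transition region $|u|\le Cs$ and $|X-\nu_F|\le C\epsilon_s$. So your right-hand side is $O(s\,\epsilon_s)$, much smaller than the volume.

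The same example defeats any weight vanishing on $\C$: the volume trapped between a leaf and the cone must be transported \emph{along} the leaves to the region where $\nu_F\ne X$. Your transverse alternative $Y=\phi\,\partial_t$ has the same defect. Only the leaf-tangential part of $Y$ has flux controlled by $|\nu_F-X|$. Its normal part $(Y\cdot X)X$ produces an uncontrolled first-order term unless $Y\cdot X$ is constant, and the ``second divergence identity'' meant to absorb it is never specified.

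\emph{The missing idea} is a field \emph{tangent} to the leaves with unit divergence. The paper builds $Y_\pm$ on $U_\pm$ with $Y_\pm\cdot X=0$, $\diver Y_\pm=1$ and $|Y_\pm(x)|\le C_\C|x|$. Slicing by $\{\lambda_\pm>s\}$ (the slice terms vanish by tangency) gives $\cL^{n+1}(F\mathbin\triangle E)=\int_{\partial^*F\cap U_+}Y_+\cdot(\nu_F-X)\,d\cH^n-\int_{\partial^*F\cap U_-}Y_-\cdot(\nu_F-X)\,d\cH^n$. Cauchy--Schwarz then yields $\cL^{n+1}(F\mathbin\triangle E)^2\le C\,\Per(F;B_1)\,D_1(F)$.

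Constructing $Y_\pm$ is the real content of the proof:
\begin{itemize}
\item Set $Y_\pm(tz)=tZ_\pm(z)$. The Jacobian $t^n\psi_\pm$ of the foliation map gives $\diver Y_\pm(tz)=\psi_\pm^{-1}\diver_{\Sigma_\pm}(\psi_\pm Z_\pm)(z)$. So one must solve $\diver_{\Sigma_\pm}(\psi_\pm Z_\pm)=\psi_\pm$ on the noncompact leaf.
\item On the graphical end, the paper takes an explicit radial field $\beta\,\partial_r$, obtained from an ODE in $r$ whose free constant balances the total flux.
\item It then adds a compactly supported correction, which exists because $H^n_c(\Sigma_\pm)\cong\R$.
\item The growth bound $|Z_\pm(z)|\le C|z|$ comes from the asymptotics $\psi J\sim r^{n-1+\gamma}$ with $n+\gamma\ge2$, or the logarithmic analogue when $b_1=0$.
\end{itemize}
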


\begin{remark}
We do not assume that $\C$ is either strictly stable or strictly minimizing
in Theorem~\ref{thm:main}.  The theorem therefore extends the quadratic
inequalities of \cite{de2014sharp,liu2019stability} from the Lawson family
to every regular area-minimizing hypercone.
\end{remark}

On the other hand, we want to study the first Dirichlet eigenvalue of the Jacobi operator. For $\varphi\in C^1(\C)$, set
\[
    Q_\C(\varphi)
    =\int_\C
      \left(|\nabla_\C\varphi|^2-|A_\C|^2\varphi^2\right)
      \,d\cH^n
\]
and define
\begin{equation}\label{eq:intro-ordinary-eigenvalue}
    \lambda_\C^D(R)
    =\inf_{\substack{0\ne\varphi\in C^1(\C)\\
                     \spt\varphi\Subset B_R}}
      \frac{Q_\C(\varphi)}
      {\displaystyle\int_\C\varphi^2\,d\cH^n}.
\end{equation}
As proved in
\eqref{eq:test-class-equivalence}, the same infimum is obtained from
$C_c^1((\C\setminus\{0\})\cap B_R)$.  Let
$\mu_1\leq\mu_2\leq\cdots$ be the eigenvalues of
$-(\Delta_S+|A_S|^2)$ and set
\[
    b_j=\sqrt{\frac{(n-2)^2}{4}+\mu_j}.
\]
Area minimality implies stability, so $b_1\geq0$.

Explicit positive lower bounds for the first Dirichlet eigenvalues of
area-minimizing Lawson cones were obtained in
\cite[Theorem~3]{de2014sharp} for the nonexceptional cases and in
\cite[Theorem~2]{liu2019stability} for the six remaining cases.  Our
next result extends these estimates to every regular area-minimizing
hypercone and identifies the optimal coefficient. The key observation is that expansion in the
eigenfunctions of the link Jacobi operator reduces the Rayleigh
infimum \eqref{eq:intro-ordinary-eigenvalue} to an associated one-dimensional radial problem.  The radial equation
can then be transformed into the Bessel equation of order $b_1$.  Its
first positive zero gives the candidate coefficient, and a
positive-solution identity together with endpoint cutoff
approximations shows that this coefficient is the exact Rayleigh
infimum.
\begin{theorem}
\label{thm:ordinary-Dirichlet-spectrum}
For $\nu\geq0$, let $J_\nu$ be the Bessel function of the first kind of
order $\nu$, and let $j_{\nu,1}$ denote its first positive zero.  Then, for
every $R>0$,
\begin{equation}\label{eq:intro-exact-ordinary-eigenvalue}
    \lambda_\C^D(R)
    =\frac{\lambda_\C^D(1)}{R^2}
    =\frac{j_{b_1,1}^2}{R^2}
    \geq\frac{4}{R^2}>0.
\end{equation}
Equivalently, $\frac{j_{b_1,1}^2}{R^2}$ is the optimal coefficient in
the quadratic inequality
\[
    Q_\C(\varphi)
    \geq\frac{j_{b_1,1}^2}{R^2}
      \int_\C\varphi^2\,d\cH^n,
    \qquad
    \varphi\in C^1(\C),\quad
    \spt\varphi\Subset B_R.
\]
Moreover, the sharp $|x|^{-2}$-weighted Jacobi inequality is
\[
    Q_\C(\varphi)
    \geq b_1^2
      \int_\C\frac{\varphi^2}{|x|^2}\,d\cH^n,
    \qquad
    \varphi\in C_c^1(\C\setminus\{0\}).
\]
In particular, $\C$ is strictly stable in the sense of
\cite{caffarelli1984minimal} if and only if $b_1>0$.
\end{theorem}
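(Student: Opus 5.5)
We prove the theorem by separating variables in polar coordinates on $\C\setminus\{0\}$, writing $x=r\omega$ with $r>0$ and $\omega\in S$. Since $\C$ is a cone, $d\cH^n=r^{n-1}\,dr\,d\omega$, $|\nabla_\C\varphi|^2=\varphi_r^2+r^{-2}|\nabla_S\varphi|^2$ and $|A_\C|^2=r^{-2}|A_S|^2$.

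\textbf{Step 1 (reduction of the test class).} We first check that the infimum \eqref{eq:intro-ordinary-eigenvalue} is unchanged if we only use $C_c^1((\C\setminus\{0\})\cap B_R)$. Since $n\ge2$, the origin has zero capacity on $\C$: the logarithmic cutoffs $\eta_\epsilon$ (equal to $0$ near $0$ and to $1$ away from an $\epsilon$-neighbourhood) satisfy $\int|\nabla\eta_\epsilon|^2\to0$. We also need $\int|A_\C|^2\varphi^2<\infty$. This follows from the Hardy inequality on cones, $\int\varphi^2/r^2\le\frac{4}{(n-2)^2}\int\varphi_r^2$ for $n\ge3$, combined with $|A_\C|^2\le C r^{-2}$. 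The case $n=2$ is vacuous, since regular minimal cones in $\R^3$ are planes. Given this, $\eta_\epsilon\varphi\to\varphi$ in the energy.

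\textbf{Step 2 (separation of variables).} Expand $\varphi(r\omega)=\sum_j f_j(r)\psi_j(\omega)$, where $\psi_j$ is an $L^2(S)$-orthonormal basis of eigenfunctions of $-(\Delta_S+|A_S|^2)$ with eigenvalues $\mu_j$. The quadratic form decouples:
\[
Q_\C(\varphi)=\sum_j\int_0^R\Bigl(f_j'^2+\frac{\mu_j}{r^2}f_j^2\Bigr)r^{n-1}dr,\qquad \int_\C\varphi^2=\sum_j\int_0^R f_j^2r^{n-1}dr .
\]
Since $\mu_j\ge\mu_1$, it is enough to treat the one-dimensional problem with $j=1$. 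Substituting $f=r^{-(n-2)/2}g$ and integrating by parts (boundary terms vanish for $f\in C_c^1(0,R)$) gives
\[
\int\Bigl(f'^2+\frac{\mu_1}{r^2}f^2\Bigr)r^{n-1}dr=\int\Bigl(g'^2+\frac{b_1^2-\tfrac14}{r^2}g^2\Bigr)r\,dr\cdot\text{(const)}
\]
in the two-dimensional radial form. Equivalently, this is the radial part of $-\Delta$ in $\R^2$ acting on the angular mode of order $b_1$. Its Dirichlet ground state on $(0,R)$ is $r^{-(n-2)/2}J_{b_1}(j_{b_1,1}r/R)$. This yields the candidate value $j_{b_1,1}^2/R^2$. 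The scaling identity $\lambda^D_\C(R)=\lambda^D_\C(1)/R^2$ is immediate from dilation invariance of $\C$.

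\textbf{Step 3 (lower bound).} Let $u=r^{-(n-2)/2}J_{b_1}(j r/R)$ with $j=j_{b_1,1}$. Then $u>0$ on $(0,R)$ and solves $-(r^{n-1}u')'+\mu_1r^{n-3}u=\frac{j^2}{R^2}r^{n-1}u$. For $f\in C_c^1(0,R)$, write $f=uh$. The Picone identity then gives
\[
\int\Bigl(f'^2+\frac{\mu_1}{r^2}f^2\Bigr)r^{n-1}-\frac{j^2}{R^2}\int f^2r^{n-1}=\int u^2h'^2r^{n-1}\ge0 .
\]
Summing over modes gives $\lambda^D_\C(R)\ge j^2/R^2$. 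The same identity with the positive solution $u=r^{-(n-2)/2}$ (for $b_1=0$) or $u=r^{-(n-2)/2+b_1}$ gives the weighted inequality with constant $b_1^2$. The bound $j_{b_1,1}\ge j_{0,1}>2$ follows from monotonicity of $j_{\nu,1}$ in $\nu$.

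\textbf{Step 4 (upper bound and optimality).} This is where we expect the main obstacle. The Bessel ground state $u\psi_1$ is not compactly supported in $(\C\setminus\{0\})\cap B_R$: it vanishes only at $r=R$, and when $b_1$ is small it is singular or borderline at $0$. We therefore approximate it by $\eta_\epsilon(r)\,u(r)\,\zeta_\delta(r)\,\psi_1$, with a cutoff $\eta_\epsilon$ near $0$ and a shift inward near $R$. Near the origin $u\sim r^{-(n-2)/2+b_1}$, so $u'^2r^{n-1}\sim r^{2b_1-1}$ is integrable and the logarithmic cutoff error tends to $0$. The endpoint $R$ is handled by using $J_{b_1}(j r/(R-\delta))$ and letting $\delta\to0$. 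Optimality of $b_1^2$ in the weighted inequality follows similarly, using the test functions $r^{-(n-2)/2+b_1\pm\epsilon}$ truncated on $[\rho,1]$. Finally, strict stability in the sense of \cite{caffarelli1984minimal} is exactly positivity of the weighted constant. This constant is $b_1^2$, so strict stability holds if and only if $b_1>0$.
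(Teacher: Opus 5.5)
Your computation of the Dirichlet eigenvalue follows the paper's route: mode decomposition, the Bessel ground state $u=r^{-(n-2)/2}J_{b_1}(j_{b_1,1}r/R)$, the Picone identity for the lower bound, and endpoint approximation for the upper bound. Shifting to $R-\delta$ instead of cutting off at $R$, and quoting monotonicity of $j_{\nu,1}$ instead of proving $\Lambda_1(R)\ge 4/R^2$ directly, are harmless variants.

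The genuine gap is in the weighted inequality. You treat the zero-energy Jacobi solution $r^{\gamma_+}=r^{-(n-2)/2+b_1}$ as if it were the ground state of the Hardy problem, which is actually $r^{-(n-2)/2}$. For $u=r^{\gamma}$ and $f=uh$ with $h\in C_c^1((0,\infty))$, the Picone identity reads
\[
\int_0^\infty\bigl(r^{n-1}f'^2+\mu_1r^{n-3}f^2\bigr)\,dr
=\int_0^\infty r^{n-1+2\gamma}h'^2\,dr
+\Bigl(b_1^2-\bigl(\gamma+\tfrac{n-2}{2}\bigr)^2\Bigr)\int_0^\infty r^{n-3}f^2\,dr .
\]
So $u=r^{\gamma_+}$ gives only $Q_\C\ge0$ when $b_1>0$. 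The constant $b_1^2$ comes from $\gamma=-\tfrac{n-2}{2}$, for every $b_1$. (Alternatively, from $r^{\gamma_+}$ plus a further weighted Hardy inequality for $h$, which you did not invoke.)

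Your Step~2 display already gives this if it is written correctly. With $g=r^{(n-2)/2}f$ the form is exactly $\int(g'^2+b_1^2g^2/r^2)\,r\,dr$, with no $-\tfrac14$ and no constant. The $-\tfrac14$ belongs to the substitution $f=r^{-(n-1)/2}g$ with measure $dr$. This is the paper's identity \eqref{eq:original-radial-Hardy-identity}; dropping $\int rg'^2$ and using $b_j\ge b_1$ yields the weighted inequality.

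The same confusion breaks your sharpness argument. Take $f=r^{-(n-2)/2+s}$ truncated at fixed scales to $[\rho,1]$. The quotient $Q_\C/\int r^{n-3}f^2$ equals $b_1^2+\int rg'^2\,dr\big/\int r^{-1}g^2\,dr$ with $g=r^s$. On the untruncated part this is $b_1^2+s^2\approx 2b_1^2$. Since $\int_\rho^1 r^{2s-1}\,dr\le 1/(2s)$ stays bounded for $s=b_1\pm\epsilon$, the truncation near $r=1$ is not negligible either. Hence the quotient stays bounded away from $b_1^2$ whenever $b_1>0$. You need $f$ concentrated on the critical homogeneity $r^{-(n-2)/2}$ and slowly varying in $\log r$. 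One choice is the paper's $r^{-(n-2)/2}\eta(\log r/L)w_1$, whose excess is $L^{-2}\int|\eta'|^2/\int\eta^2\to0$; another is $r^{-(n-2)/2\pm\epsilon}$. Your conclusion ``strict stability iff $b_1>0$'' survives only because at $b_1=0$ your choices happen to coincide with the correct ones.

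Finally, in Step~4 the claim that $r^{n-1}u'^2\sim r^{2b_1-1}$ is integrable fails precisely for $b_1=0$, $n\ge3$. There it behaves like $\tfrac{(n-2)^2}{4}r^{-1}$, and the energy of $\eta_\epsilon u$ is finite only through cancellation with $\mu_1 r^{n-3}u^2$. Estimate the error through your Step~3 identity instead, as the paper does. There the excess is exactly $\int r^{n-1}u^2|\eta_\epsilon'|^2\big/\int r^{n-1}\eta_\epsilon^2u^2$, and $r^{n-1}u^2\sim r^{1+2b_1}$ makes it $O(|\log\epsilon|^{-1})$.
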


\begin{remark}
\hfill
\begin{itemize}
    \item The positivity of $\lambda_\C^D(R)$ does not detect strict stability (see the definition in Subsection~\ref{subsec:jacobi-bessel}): even
in the critical stable case $b_1=0$, this eigenvalue still has a
positive lower bound (for every fixed $R>0$).
\item The area-minimizing assumption is not needed for Theorem~\ref{thm:ordinary-Dirichlet-spectrum}: the conclusion remains
valid for every stable regular minimal hypercone $\C$.
\end{itemize}
    
\end{remark}  

We now turn to the weighted inequality that characterizes the two strictness conditions.

For Lawson cones, the explicit constructions in
\cite[Lemma~4.1 and equations~(4.12)--(4.13)]{de2014sharp} and
\cite{liu2019stability} exploit rotational symmetry to produce
quantitative subcalibrations whose signed divergences control
$\dist(z,M_{k,h})/|z|^2$.  Thus the distance weight is already intrinsic
to the Lawson-cone argument.  Motivated by this observation, for a
general regular area-minimizing hypercone we define
\begin{equation}\label{eq:weighted-distance-def}
    \mathcal W_R(F,E)
    =
    \int_{F\mathbin\triangle E}
    \frac{\dist(x,\C)}{|x|^2}\,dx,
    \qquad
    F\mathbin\triangle E\Subset B_R.
\end{equation} 
Moreover, if $\partial F$ is a small
normal graph of height $u$ over $\C$, the leading-order term of
$\mathcal W_R(F,E)$ is
\[
    \frac12
    \int_\C\frac{u^2}{|x|^2}\,d\cH^n,
\]
which is precisely the quantity appearing in the strict-stability
inequality.

The following theorem is the main result of the paper.  It shows
that the corresponding global weighted quantitative inequality
characterizes exactly the simultaneous strict stability and strict
minimality of the cone.

\begin{theorem}
\label{thm:weighted-characterization}
The following statements are equivalent.
\begin{enumerate}[(a)]
\item The cone $\C$ is strictly stable in the sense of
\cite{caffarelli1984minimal} and strictly minimizing in the sense of
\cite{hardt1985area}.
\item There is a constant $c_\C>0$ such that, for every $R>0$ and every
set $F$ of locally finite perimeter satisfying
$F\mathbin\triangle E\Subset B_R$,
\begin{equation}\label{eq:weighted-inequality}
    D_R(F)\geq c_\C\mathcal W_R(F,E).
\end{equation}
\end{enumerate}
\end{theorem}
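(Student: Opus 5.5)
We prove the two implications separately. Throughout, we use the Hardt--Simon foliations: in each component $E$ and $\R^{n+1}\setminus\overline E$ there is a smooth area-minimizing hypersurface $S_\pm$ whose dilations $\lambda S_\pm$, $\lambda>0$, foliate that component. Let $\nu$ denote the unit normal field of this foliation, extended by $\nu_E$ on $\C$ and oriented consistently with $\nu_E$.

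\textbf{(a)$\Rightarrow$(b).} We use a subcalibration argument. Since $\diver\nu=0$ (each leaf is minimal), the field $\nu$ alone only gives $D_R(F)\ge0$. Instead we consider the perturbed field
\[
X=\nu+\varepsilon\,\psi(x)\,\frac{x}{|x|},
\]
or more naturally a field built from a rescaled leaf normal, chosen so that $\pm\diver X\ge c\,\dist(x,\C)/|x|^2$ on the appropriate side while $|X|\le1$ is kept.

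The key input is the Hardt--Simon asymptotics of the leaves under the two strictness assumptions. If $\C$ is strictly minimizing, then $S_\pm$ is a graph over $\C$ outside a compact set with height $u\sim c\,r^{\gamma^\pm}$, where $\gamma^\pm=-\frac{n-2}{2}\pm b_1$. Strict stability gives $b_1>0$ by Theorem~\ref{thm:ordinary-Dirichlet-spectrum}, hence $\gamma^+\neq\gamma^-$ and the leaves approach $\C$ at the faster rate $r^{\gamma^-}$ with a genuine spectral gap.

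Using this gap, we replace the exact foliation by a one-parameter family of hypersurfaces $\Sigma_t$ that are slightly sub/supersolutions. Concretely, take the graphs of $t\,r^{\gamma}\phi_1(\omega)$ with $\gamma$ strictly between $\gamma^-$ and $\gamma^+$ near $\C$, glued to the true leaves away from $\C$. Their mean curvature then has a definite sign of size $\gtrsim \dist/|x|^2$. Calibrating with the unit normal $X$ of this family and applying the divergence theorem on $F\triangle E$ gives
\[
D_R(F)\ \ge\ \int_{F\triangle E}|\diver X|\ \gtrsim\ \mathcal W_R(F,E).
\]
The scale invariance of $\mathcal W_R$ and of $D_R$ (both are homogeneous of degree $n$) makes the constant independent of $R$.

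Far from $\C$ (in cones around the leaves' interior), the leaves are at distance comparable to $|x|$. There we need a mean-curvature bound $\gtrsim 1/|x|$ for the perturbed family. We obtain it by dilating leaves slightly nonuniformly, $x\mapsto \lambda(|x|)x$, which by homogeneity costs exactly a term of order $1/|x|$.

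\textbf{Main obstacle.} The hardest step is producing a single unit vector field, smooth on $\R^{n+1}\setminus\{0\}$, with $|X|\le1$ and $\diver X$ of the correct sign with the bound $\gtrsim \dist(x,\C)/|x|^2$ uniformly. The delicate regions are the transition between the near-cone regime governed by the Jacobi-field expansion and the far regime; this is where the gap $b_1>0$ must be used quantitatively. I would first try to construct $X$ on the link sphere (one homogeneous degree) via a one-parameter family of hypersurfaces in $\mathbb S^n$, then extend it by homogeneity.

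\textbf{(b)$\Rightarrow$(a).} We argue by contrapositive, once for each strictness condition.

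\emph{Failure of strict stability.} If $b_1=0$, take $F$ whose boundary is the normal graph of $su$ over $\C$, with $u=r^{-(n-2)/2}\phi_1\eta(\log r)$ for a long logarithmic cutoff $\eta$. Expanding the perimeter to second order gives $D_R(F)\approx\frac{s^2}2Q_\C(u)$, while $\mathcal W_R\approx\frac{s^2}2\int u^2/|x|^2$. By the sharpness of the weighted inequality in Theorem~\ref{thm:ordinary-Dirichlet-spectrum} (with $b_1=0$), the ratio $Q_\C(u)/\int u^2|x|^{-2}$ can be made arbitrarily small. Choosing $s$ small enough that higher-order terms are controlled then contradicts (b).

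\emph{Failure of strict minimality.} If $\C$ is not strictly minimizing, then by Hardt--Simon the leaves decay at the slow rate $r^{\gamma^+}$. Let $F$ be the region between $\C$ and a leaf $\lambda S_+$ inside $B_R$, capped off along $\partial B_R$. Then $D_R(F)$ is bounded by the cap area, which is of order $R^{n-1}\cdot R\,(\lambda/R)^{\dots}$ (the leaf's height at radius $R$ times $R^{n-1}$). Meanwhile $\mathcal W_R$ integrates $\dist/|x|^2$ over the whole thickened region. Comparing the exponents shows that $\mathcal W_R/D_R\to\infty$ as $R/\lambda\to\infty$. This scaling comparison, and in particular controlling the cap cost, is the second point that needs care.
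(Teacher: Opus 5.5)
Your strict-stability argument is fine. It is the contrapositive of the paper's: second-order expansions of $D_R$ and $\mathcal W_R$ for normal graphs, plus optimality of $b_1^2$.

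The genuine gap is that you have the Hardt--Simon root criterion backwards. You assert that strict minimality gives the fast rate $r^{\gamma^-}$ and that its failure gives the slow rate. By Proposition~\ref{prop:HS-root-criterion} it is the other way round: under strict stability, $\C$ is strictly minimizing if and only if both leaves have the \emph{slow} exponent $\gamma_+=-\frac{n-2}{2}+b_1$. Failure of strict minimality produces a leaf with the \emph{fast} exponent $\gamma_-$. As a sanity check, for a hyperplane with $n\ge3$, which is strictly stable and strictly minimizing, the leaves are parallel hyperplanes of constant height $r^0=r^{\gamma_+}$.

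So your necessity argument is aimed at the case in which (b) actually holds, and no contradiction can come from it. Moreover, the capped competitor fails for either exponent. On the cap $\nu_F=x/|x|$, while $X\cdot x/|x|=\psi(z)/|z|\to0$. The leaf and cone parts contribute nothing to $\int(1-X\cdot\nu_F)$, so by Lemma~\ref{lem:calibration-identity} $D$ is comparable to the cap area $\sim R^{n-1+\gamma}$. By contrast, $\mathcal W\sim R^{2b_1}$ if $\gamma=\gamma_+$ and $\mathcal W=O(1)$ if $\gamma=\gamma_-$. Since $b_1\le\frac{n-2}{2}$, the ratio $\mathcal W/D$ tends to $0$, not to $\infty$.

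The paper instead takes the fast leaf and interpolates $u_R=\eta(r/R)h$ smoothly from leaf to cone on $R\le r\le 2R$. It then uses the exact identity $D=\frac12\int|\nu_F-X|^2$ with $|\nu_F-X|\lesssim R^{\gamma_--1}$ on the transition annulus. This quadratic gain gives $D\lesssim R^{n+2\gamma_--2}=R^{-2b_1}\to0$, while $\mathcal W\ge c_0>0$ from the fixed core of the region between $\C$ and the leaf.

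For (a)$\Rightarrow$(b), the same reversal means your plan never locates where strict minimality is used. It must be used, since (b) fails as soon as one leaf is fast. Your near-cone graphs $t\,r^\gamma\phi_1$ with $\gamma_-<\gamma<\gamma_+$ use only $b_1>0$. The real difficulty is obtaining $\diver\gtrsim q_\C$ away from the cone, where the leaves have $\diver X=0$, and matching it to the near-cone part. The ``nonuniform dilation'' you propose comes with no sign control.

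The paper keeps the exact leaves and tilts $X$ by a degree-zero field tangent to them: $\mathcal X=(X\pm\delta W_\pm)/\sqrt{1+\delta^2|W_\pm|^2}$ with $W_\pm(tz)=Z(z)$. This reduces everything to solving $\diver_\Sigma(\psi Z)=\psi^2/|z|^2$ on a single leaf with $|Z|\lesssim\psi/|z|$ and $|dZ|\lesssim\psi/|z|^2$ (Lemma~\ref{lem:weighted-leaf-field}). That is a flux problem. The flux through $\{r=R\}$ must equal $\int_{\Sigma\cap\{r<R\}}\psi^2/|z|^2$, which is bounded below by a positive constant. The admissible flux is at most $R^{n-1}\cdot R^{\gamma}\cdot R^{\gamma-1}=R^{2\gamma+n-2}$. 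For $\gamma=\gamma_+$ both sides grow like $R^{2b_1}$, but for $\gamma=\gamma_-$ the admissible flux is $R^{-2b_1}\to0$.

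That matching is exactly where slow decay on both sides, i.e.\ strict minimality, enters. Without an argument of this kind, your sketch cannot separate the strictly minimizing case from the case where (b) is false.
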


The forward implication follows the quantitative-subcalibration
strategy of \cite{de2014sharp,liu2019stability}.  The new point is to
construct, without relying on Lawson symmetry, a unit subcalibration
whose signed divergence is comparable to the intrinsic weight
$\dist(x,\C)/|x|^2$.

For the converse implication, we use the compactly supported normal variations
to recover a positive $|x|^{-2}$-weighted Jacobi gap and hence strict
stability.  Once strict stability is known, the root criterion in
\cite[Theorem~3.2]{hardt1985area} shows that failure of strict minimality
produces a foliation leaf with the fast indicial decay.  Cutting off such
a leaf and joining it to the cone gives competitors whose perimeter
deficits tend to zero while their weighted distances remain bounded away
from zero, contradicting the weighted quantitative inequality.

Next we will show that the weighted
quantitative inequality (in Theorem \ref{thm:weighted-characterization}) implies the unweighted quadratic inequality (in Theorem \ref{thm:main}),
while the converse fails in general. 

\subsection*{From the weighted inequality to the unweighted quadratic inequality}
\label{subsec:weighted-to-unweighted}

Assume that condition\/ {\rm (b)} of
Theorem~\ref{thm:weighted-characterization} holds. For $x\ne0$, write
\[
    q_\C(x):=\frac{\dist(x,\C)}{|x|^2}.
\]
% We first establish a distributional estimate for this weight.  Let
% $S=\C\cap\mathbb S^n$.  Since $S$ is a smooth compact embedded
% hypersurface of $\mathbb S^n$, the tubular-neighborhood estimate gives
% a constant $C_0=C_0(\C)$ such that
% \[
%     \cH^n\left(
%         \left\{\theta\in\mathbb S^n:
%         \dist(\theta,\C)\leq s\right\}
%     \right)
%     \leq C_0\min\{s,1\}
% \]
% for every $s>0$.  Here the distance to $\C$ is comparable, near $S$, to
% the spherical distance to $S$.

By the homogeneity of $\C$, for any $\theta\in\mathbb S^n$,
\[
    \dist(r\theta,\C)=r\dist(\theta,\C),
    \qquad
    q_\C(r\theta)=\frac{\dist(\theta,\C)}{r}.
\]
Consequently, polar coordinates yield that for every $t>0$,
\begin{align}
    \cL^{n+1}\bigl(\{x\in B_R:q_\C(x)\leq t\}\bigr)
    &=
    \int_0^R r^n
    \cH^n\left(
        \left\{\theta\in\mathbb S^n:
        \dist(\theta,\C)\leq tr\right\}
    \right)\,dr \notag\\
    &\leq
    C_1\int_0^R r^n\min\{tr,1\}\,dr \notag\\
    &\leq C_1 tR^{n+2},
    \label{eq:weight-sublevel-estimate}
\end{align}
for some $C_1=C_1(\C)>0$.

Now set
\[
    A=F\mathbin\triangle E,
    \qquad
    m=\cL^{n+1}(A).
\]
If $m=0$, there is nothing to prove.  Otherwise, choose
\[
    t_0=\frac{m}{2C_1R^{n+2}}.
\]
By \eqref{eq:weight-sublevel-estimate},
\[
    \cL^{n+1}\bigl(A\cap\{q_\C>t_0\}\bigr)
    \geq
    m-\cL^{n+1}\bigl(\{q_\C\leq t_0\}\cap B_R\bigr)
    \geq\frac{m}{2}.
\]
It follows that
\begin{align}
    \mathcal W_R(F,E)
    =\int_A q_\C(x)\,dx
    &\geq
    t_0\,
    \cL^{n+1}\bigl(A\cap\{q_\C>t_0\}\bigr) \notag\\
    &\geq
    \frac{m^2}{4C_1R^{n+2}}.
    \label{eq:weighted-controls-volume}
\end{align}
Combining this with
\eqref{eq:weighted-inequality}, we obtain
\[
    D_R(F)
    \geq
    c_\C\mathcal W_R(F,E)
    \geq
    c_\C'
    \frac{\cL^{n+1}(F\mathbin\triangle E)^2}{R^{n+2}}.
\]
Thus the weighted quantitative inequality implies the unweighted
quadratic inequality.

\begin{remark}
\label{rem:plane-comparison}
The converse implication is not true in general. However,
Theorem~\ref{thm:main} is stronger in scope: it proves the unweighted
quadratic inequality for every regular area-minimizing hypercone,
without assuming either strict stability or strict minimality.
Let $\C=\R^2\times\{0\}$ and $E=\{x_3<0\}$.  Then
$\mu_1=0$, so the plane is stable but not strictly stable.  By
\cite[p.~116]{hardt1985area}, it is also not strictly minimizing.
Theorem~\ref{thm:main} nevertheless applies, and
Theorem~\ref{thm:ordinary-Dirichlet-spectrum} gives
$\lambda_\C^D(R)\geq4/R^2>0$.

The failure of the weighted inequality is also visible directly.  Choose
$\eta\in C_c^\infty([0,\infty);[0,1])$ with $\eta=1$ on $[0,1]$ and
$\eta=0$ on $[2,\infty)$, denote the stretching function $u_R(y)=\eta(|y|/R)$, and let
$$
    F_R=\{(y,s)\in\R^2\times\R:s<u_R(y)\}.
$$
Then $F_R\mathbin\triangle E\Subset B_{3R}$ for large $R$, and
$$
    D_{3R}(F_R)
    =\int_{\R^2}\left(\sqrt{1+|\nabla u_R|^2}-1\right)dy
    \leq C.
$$
On the other hand,
\begin{align*}
    \mathcal W_{3R}(F_R,E)
    &=\int_{\R^2}\int_0^{u_R(y)}
      \frac{s}{|y|^2+s^2}\,ds\,dy\\
    &\geq c\int_1^R\frac{dr}{r}
      \geq c\log R.
\end{align*}
Thus no uniform positive constant can hold.
\end{remark}

% We briefly describe the proofs.  The unit normals to the foliation leaves
% form a divergence-free calibration $X$.  On each side we construct
% a field $Y_\pm$, tangent to the leaves, with $\diver Y_\pm=1$ and
% $|Y_\pm(x)|\leq C_\C|x|$.  The resulting area and volume flux identities
% prove Theorem~\ref{thm:main}.  Theorem~\ref{thm:ordinary-Dirichlet-spectrum}
% follows from separation of variables, the ground-state transformation
% $v=r^{(n-2)/2}\varphi$, and a weighted one-dimensional estimate.  For
% Theorem~\ref{thm:weighted-characterization}, strict stability and strict minimality
% allow us to solve a weighted tangential divergence equation on each
% foliation leaf.  Its zero-homogeneous extension gives a quantitative
% subcalibration and hence the weighted inequality.  Conversely, normal
% variations give strict stability, while cutting off a hypothetical
% fast-decaying foliation leaf contradicts the weighted inequality and
% therefore gives strict minimality.

\paragraph{Acknowledgments.} I would like to thank Guido De Philippis, Martin Li, Zhenhua Liu, Francesco Maggi, Luca Spolaor, and Zhihan Wang for their encouragement and helpful comments on the manuscript. I am particularly grateful to Zhihan Wang for helpful discussions concerning Theorem~\ref{thm:main} at the beginning of this project.
\section*{Disclosure of AI tools}

This work made substantial use of OpenAI's GPT-5.6 Ultra as an interactive aid in developing research ideas, drafting and revising the manuscript, and checking proofs. I formulated the problem, directed the investigation, identified the relevant literature and possible strategies, and selected the approaches to pursue. 

Before using the model, I had developed an approach to proving Theorem~\ref{thm:main} and establishing the sufficiency direction of Theorem~\ref{thm:weighted-characterization}, but this approach required the regular cone to be both strictly stable and strictly minimizing. Through iterative discussions with the model, I identified and developed an alternative method that establishes Theorem~\ref{thm:main} for arbitrary regular area-minimizing cones and proves the necessity direction of Theorem~\ref{thm:weighted-characterization}. I independently verified all mathematical statements and proofs and take full responsibility for the final content of the paper.

\section{Notation and Preliminaries}
\label{sec:preliminaries}

We use the same notation as in Section~\ref{sec:introduction} for the cone $\C$.  We first recall the foliation and asymptotic results from \cite[Theorems~2.1 and~3.2]{hardt1985area} used below, and then cover the Jacobi spectral decomposition, the radial variational reduction, and the foliation-coordinate identities needed in the proofs.

\subsection*{Hardt--Simon foliations}

The two components of $\R^{n+1}\setminus\C$ are
$$
    U_+=\R^{n+1}\setminus\overline E,
    \qquad
    U_-=E.
$$
By \cite[Theorem~2.1]{hardt1985area}, there are smooth properly embedded area-minimizing
hypersurfaces $\Sigma_\pm\subset U_\pm$, normalized by
$$
    \dist(0,\Sigma_\pm)=1,
$$
such that
\begin{equation}\label{eq:foliation-map}
    \Phi_\pm:(0,\infty)\times\Sigma_\pm\longrightarrow U_\pm,
    \qquad
    \Phi_\pm(t,z)=tz,
\end{equation}
is a diffeomorphism.  Thus $\{t\Sigma_\pm:t>0\}$ foliates $U_\pm$. Each leaf is connected and intersects
every ray uniquely and transversely.
We orient $\Sigma_\pm$ by choosing the unit normals $\nu_\pm$ so that
$$
    \pm x\cdot \nu_\pm(x)>0
    \qquad\text{for every }x\in\Sigma_\pm.
$$

We define
\begin{equation}\label{eq:signed-foliation}
    S_\lambda
    =\begin{cases}
        \lambda\Sigma_+,&\lambda>0,\\
        \C,&\lambda=0,\\
        |\lambda|\Sigma_-,&\lambda<0.
      \end{cases}
\end{equation}
Thus $\Sigma_\pm=S_{\pm1}$ and
$S_{\pm t}=t\Sigma_\pm$ for $t>0$.  These sets are pairwise disjoint and
cover $\R^{n+1}$; for each sign,
$\{S_{\pm t}:t>0\}$ is the smooth foliation of $U_\pm$, and
$S_0=\C$.  For $t>0$ and $z\in\Sigma_\pm$, the normal on the
corresponding leaf is
$$
    \nu_{\pm t}(tz)=\nu_\pm(z),
$$
and $\nu_0=\nu_E$ on $S_0\setminus\{0\}$.

\subsection*{Jacobi spectrum and the radial eigenvalue problem}
\label{subsec:jacobi-bessel}

 Let
$\{w_j\}_{j=1}^\infty$ be an orthonormal eigenbasis of
$-(\Delta_S+|A_S|^2)$, with corresponding eigenvalues
\[
    \mu_1\leq\mu_2\leq\cdots,
\]
and set
\[
    b_j
    =
    \sqrt{\frac{(n-2)^2}{4}+\mu_j}.
\]
For $\varphi\in C_c^1(\C\setminus\{0\})$, set
\begin{align}
    Q_\C(\varphi)
    &=\int_\C
      \left(|\nabla_\C\varphi|^2-|A_\C|^2\varphi^2\right)
      \,d\cH^n.\label{eq:cone-Jacobi-form}
\end{align}

By \cite[Theorem 4.5]{caffarelli1984minimal} and \cite[Lemma 2.1]{wang2020deformations},
\begin{equation}\label{eq:sharp-cone-Hardy-bound}
    Q_\C(\varphi)
    \geq b_1^2\int_\C\frac{\varphi^2}{|x|^2}\,d\cH^n,
\end{equation}
and the constant $b_1^2$ is optimal.  Consequently, stability is equivalent
to 
\begin{equation}\label{eq:stability-mu}
    \mu_1\geq-\frac{(n-2)^2}{4}.
\end{equation}
For convenience, we denote
$$
    \gamma_\pm
    =-\frac{n-2}{2}\pm b_1.
$$
Note that $\mu_1
    \leq 0$ and
\begin{equation}\label{eq:gamma-lower}
    \gamma_-
    =-\frac{n-2}{2}-\sqrt{\frac{(n-2)^2}{4}+\mu_1}
    \geq-(n-2).
\end{equation}

We call $\C$ \emph{strictly stable} if the inequality in
\eqref{eq:stability-mu} is strict. 
If equality in \eqref{eq:stability-mu} holds, we say that
$\C$ is stable but not strictly stable; in that case the two indicial
roots coincide and
$$
    \gamma_- =\gamma_+=-\frac{n-2}{2}.
$$

We next record the polar decomposition of the Jacobi form and reduce the
ordinary Dirichlet problem to a one-dimensional radial variational
problem.

Fix
$\varphi\in C_c^1(\C\setminus\{0\})$, choose $R>0$ such that
$\spt\varphi\Subset B_R$, and write
\[
    \varphi(r,\omega)
    =
    \sum_{j=1}^\infty u_j(r)w_j(\omega).
\]
Since
\[
    d\cH^n_\C
    =
    r^{n-1}\,dr\,d\cH^{n-1}_S,
\]
we have
\begin{equation}\label{eq:polar-quadratic-form}
    Q_\C(\varphi)
    =
    \sum_j\int_0^R
      \left(
        r^{n-1}|u_j'|^2+\mu_jr^{n-3}u_j^2
      \right)\,dr
\end{equation}
and
\begin{equation}\label{eq:Dirichlet-mode-L2}
    \int_{\C\cap B_R}\varphi^2\,d\cH^n
    =
    \sum_j\int_0^Rr^{n-1}u_j^2\,dr.
\end{equation}

Set the one-dimensional variational problem
\begin{equation}\label{eq:first-radial-Rayleigh-quotient}
    \Lambda_1(R)
    :=
    \inf_{0\ne u\in C_c^1((0,R))}
    \frac{\displaystyle
      \int_0^R
      \left(r^{n-1}|u'|^2+\mu_1r^{n-3}u^2\right)\,dr}
    {\displaystyle\int_0^Rr^{n-1}u^2\,dr}.
\end{equation}
Since $\mu_j\geq\mu_1$, for every $u\in C_c^1((0,R))$,
\[
    \int_0^R
    \left(r^{n-1}|u'|^2+\mu_jr^{n-3}u^2\right)\,dr
    \geq
    \int_0^R
    \left(r^{n-1}|u'|^2+\mu_1r^{n-3}u^2\right)\,dr.
\]
Considering test functions of the form
$u(r)w_1(\omega)$, we obtain
\begin{equation}\label{eq:punctured-first-mode-reduction}
    \inf_{\substack{0\ne\varphi\in
      C_c^1((\C\setminus\{0\})\cap B_R)}}
    \frac{Q_\C(\varphi)}
    {\displaystyle\int_\C\varphi^2\,d\cH^n}
    =
    \Lambda_1(R).
\end{equation}

Denote $ \alpha=\frac{n-2}{2}.$
For $u\in C_c^1((0,R))$, integration by parts gives
\begin{equation}\label{eq:original-radial-Hardy-identity}
    \int_0^R
    \left(r^{n-1}|u'|^2+\mu_1r^{n-3}u^2\right)\,dr
    =
    \int_0^Rr\left|\left(r^\alpha u\right)'\right|^2\,dr
    +b_1^2\int_0^Rr^{n-3}u^2\,dr.
\end{equation}
The elementary estimate
\[
    |r^\alpha u(r)|^2
    \leq \int_r^R \frac1s \,ds
    \int_r^R
      s\left|\left(s^\alpha u(s)\right)'\right|^2\,ds\leq 
    \log\frac Rr
    \int_r^R
      s\left|\left(s^\alpha u(s)\right)'\right|^2\,ds
\]
and Fubini's theorem yield
\begin{align*}
    \int_0^R r^{n-1}u(r)^2\,dr
&\leq
\int_0^R
r\log\frac Rr
\int_r^R
s\left|
\left(s^\alpha u(s)\right)'
\right|^2ds\,dr\\
&=
\int_0^R
s\left|
\left(s^\alpha u(s)\right)'
\right|^2
\left(
\int_0^s r\log\frac Rr\,dr
\right)ds.
\end{align*}
Since 
$$\int_0^s r\log\frac Rr\,dr
=
\frac{s^2}{2}\log\frac Rs+\frac{s^2}{4}
\leq
\frac{R^2}{4}. $$
We get
\[
\int_0^Rr^{n-1}u^2\,dr
\leq
\frac{R^2}{4}
\int_0^Rr
\left|
\left(r^\alpha u\right)'
\right|^2dr.
\]
Since $b_1^2\geq0$, it follows from
\eqref{eq:original-radial-Hardy-identity} that
\begin{equation}\label{eq:radial-positive-lower-bound}
    \Lambda_1(R)\geq\frac4{R^2}>0.
\end{equation}
This already provides a lower bound in Theorem~\ref{thm:ordinary-Dirichlet-spectrum}; we will establish the sharp lower bound in Section~\ref{sec:proof-idea}.

\subsection*{Strict minimality and asymptotic estimates}

By \cite[Theorem~2.1 and (1.9)]{hardt1985area}, each
$\Sigma_\pm$ is a normal graph over $\C$ outside a sufficiently large ball, i.e.,
there is an $R_0>0$ such that for every $(r,\omega)\in(R_0,\infty)\times S$,
\begin{equation}\label{eq:end-parametrization}
    G_\pm(r,\omega)
    =r\omega+h_\pm(r,\omega)\nu_E(\omega).
\end{equation}

For each sign, let
$J_\pm:(R_0,\infty)\times S\longrightarrow(0,\infty)$ be the area
density of the parametrization $G_\pm$, i.e.,
$$
    G_\pm^*(d\cH^n_{\Sigma_\pm})
    =J_\pm(r,\omega)\,dr\,d\cH^{n-1}_S(\omega).
$$

For $R>R_0$, we write
\begin{equation}\label{eq:partial_r}
    \partial_r=G_{\pm*}(\partial/\partial r)=\partial_rG_\pm.
\end{equation}
If $u\in C^1((R_0,\infty)\times S)$, then 
\begin{equation}\label{eq:end-radial-divergence}
    \diver_{\Sigma_\pm}(u\partial_r)(G_\pm(r,\omega))
    =\frac{1}{J_\pm(r,\omega)}
      \partial_r\bigl(J_\pm u\bigr)(r,\omega).
\end{equation}

Next we introduce the notion of strict minimality from \cite[Definition~3.1]{hardt1985area}.  Let
$\C_1$ be the multiplicity-one current carried by $\C\cap B_1$, with the
orientation induced by $\nu_E$.

\begin{definition}
\label{def:strict-minimizing}
The cone $\C$ is \emph{strictly minimizing} if there is $\Theta_\C>0$
such that
\begin{equation}\label{eq:HS-strict-minimizing-definition}
    \mathbf M(\C_1)
    \leq \mathbf M(T)-\Theta_\C\varepsilon^n
\end{equation}
whenever $\varepsilon>0$ and $T$ is an integer-multiplicity
$n$-current satisfying
$$
    \spt T\subset\R^{n+1}\setminus B_\varepsilon,
    \qquad
    \partial T=\partial\C_1.
$$
\end{definition}
If $\C$ is strictly stable, then $\gamma_-<\gamma_+$. For each side, write $h=h_\pm$; there are $a\ne0$ and $\beta=\gamma_-$ or $\gamma_+$ such that
\begin{equation}\label{eq:power-asymptotic}
    h(r,\omega)
    =a r^{\beta}w_1(\omega)
      +O_2(r^{\beta-\varepsilon}).
\end{equation}
If $\C$ is stable but not strictly stable, then, for each side,
\begin{equation}\label{eq:log-asymptotic}
    h(r,\omega)
    =r^{-(n-2)/2}(a+b\log r)w_1(\omega)
      +O_2(r^{-(n-2)/2-\varepsilon}),
\end{equation}
where $(a,b)\ne(0,0)$.  The notation
$O_2(r^\alpha)$ means that the remainder and its first two derivatives on $\C$ are bounded, uniformly in $\omega$, by
$Cr^{\alpha-j}$ for $j=0,1,2$ (see \cite[Lemma~4.6(2)]{wang2020deformations}). Hence, in either case,
\begin{equation}\label{eq:J-two-sided}
   J_\pm(r,\omega)= r^{n-1}(1+o(1)).
\end{equation}

With these orientations, set
\begin{equation}\label{eq:psi-def}
    \psi_\pm(z):=\pm z\cdot\nu_\pm(z)>0
    \qquad\text{on }\Sigma_\pm.
\end{equation}
By \cite[Theorem~2.1 and Remark~2.2(2)--(3)]{hardt1985area}, the normal
component $z\cdot\nu_\pm(z)$ of the infinitesimal dilation is a Jacobi field
with a strict sign and is asymptotically comparable to the height of the
normal graph: for all sufficiently
large $r$,
\begin{equation}\label{eq:HS-psi-comparison}
    c_\pm|h_\pm(r,\omega)|
    \leq\psi_\pm(G_\pm(r,\omega))
    \leq C_\pm|h_\pm(r,\omega)|,
\end{equation}
for some $0<c_\pm<C_\pm<\infty$, uniformly in $\omega$. 

In the case of $\C$ being strictly stable, we have the following
consequences of the Hardt--Simon and Wang asymptotics.

\begin{proposition}[{\cite[Theorem 3.2]{hardt1985area}}]
\label{prop:HS-root-criterion}
Assume that $\C$ is strictly stable, and choose $w_1>0$.  Then $\C$ is strictly minimizing if
and only if both foliation leaves have the slow exponent, namely
\begin{equation}\label{eq:slow-root-both-sides}
    \beta=\gamma_+
    =-\frac{n-2}{2}+b_1.
\end{equation}
If these equivalent conditions hold, put $\gamma=\gamma_+$.  There are
$\varepsilon>0$ and $R_1>R_0$ such that, for each
$\sigma\in\{+,-\}$, there is $a_\sigma\ne0$ satisfying
\begin{equation}\label{eq:slow-weighted-C2}
    h_\sigma
    =a_\sigma r^\gamma w_1+O_2(r^{\gamma-\varepsilon}).
\end{equation}
\end{proposition}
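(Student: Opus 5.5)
The proof is essentially a bookkeeping reduction to \cite[Theorem~3.2]{hardt1985area} and the refined asymptotics \eqref{eq:power-asymptotic}. I take the strictly stable case, so that $\gamma_-<\gamma_+$, and fix the normalization $w_1>0$ on $S$. This is possible because the first eigenfunction of $-(\Delta_S+|A_S|^2)$ on the connected link $S$ is simple and sign-definite.

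\textbf{Step 1: the equivalence.} By \eqref{eq:power-asymptotic}, each $h_\pm$ has leading term $a_\pm r^{\beta_\pm}w_1$ with $a_\pm\neq0$ and $\beta_\pm\in\{\gamma_-,\gamma_+\}$. The point to check is that the leading mode is the $w_1$-mode. This holds because $\Sigma_\pm$ lies on one side of $\C$, so $h_\pm$ has a fixed sign for $r>R_0$. The Jacobi field $\psi_\pm$ is positive and comparable to $|h_\pm|$ by \eqref{eq:HS-psi-comparison}. Among the modes $r^{\gamma_j^\pm}w_j$ of the linearized equation, only the $j=1$ modes are sign-definite, so the leading term in the expansion of $h_\pm$ must be a first-mode term. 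With this in hand, Hardt--Simon's Theorem~3.2 says exactly that $\C$ is strictly minimizing if and only if neither minimizing leaf decays at the fast rate $r^{\gamma_-}$.

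For orientation, the mechanism in Hardt--Simon is as follows.
\begin{itemize}
\item If one leaf decays like $r^{\gamma_-}$, then rescalings of that leaf, truncated and joined to $\C$ outside $B_1$, give competitors avoiding $B_\varepsilon$ whose mass excess is $o(\varepsilon^n)$. This violates \eqref{eq:HS-strict-minimizing-definition}.
\item Conversely, if both leaves have the slow rate $r^{\gamma_+}$, the foliation together with a comparison/calibration argument on the region swept out between $\C$ and $\lambda\Sigma_\pm$ yields the quantitative gap $\Theta_\C\varepsilon^n$.
\end{itemize}
I would simply invoke this theorem, and verify only that its hypotheses (regular, area-minimizing, strictly stable) match ours and that its ``slow'' exponent is our $\gamma_+$.

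\textbf{Step 2: the uniform expansion.} Assume the equivalent conditions hold, so that $\beta_+=\beta_-=\gamma_+=:\gamma$. Applying \eqref{eq:power-asymptotic} on each side separately gives $\varepsilon_\pm>0$ and radii beyond which
\[
h_\sigma=a_\sigma r^\gamma w_1+O_2(r^{\gamma-\varepsilon_\sigma}),\qquad a_\sigma\neq0.
\]
The $O_2$ control of the remainder and its first two derivatives comes from \cite[Lemma~4.6(2)]{wang2020deformations}. Taking $\varepsilon=\min\{\varepsilon_+,\varepsilon_-\}$ and $R_1$ larger than $R_0$ and both of these radii gives \eqref{eq:slow-weighted-C2}. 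The signs are forced: $a_+>0$ and $a_-<0$, since $w_1>0$ and $\Sigma_\pm\subset U_\pm$ lie on the indicated sides of $\C$ relative to $\nu_E$.

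\textbf{Main obstacle.} The delicate point is the identification in Step~1 that the leading term of $h_\pm$ is a first-mode term. Higher modes $r^{\gamma_j^+}w_j$ with $\gamma_j^+<1$ could a priori dominate $r^{\gamma_+}$ in the asymptotic expansion. My first approach is to use the sign of $h_\pm$ through \eqref{eq:HS-psi-comparison}, combined with the orthogonality of the $w_j$ to $w_1>0$. Projecting $h_\pm(r,\cdot)$ onto $w_1$ shows that the $w_1$-coefficient dominates the $L^1(S)$ norm of $h_\pm(r,\cdot)$ up to constants. Hence no higher mode can be the leading term, and the leading exponent is $\gamma_-$ or $\gamma_+$.
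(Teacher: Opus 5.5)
Your proposal is correct and matches the paper, which also gives no independent argument. It takes the equivalence directly from \cite[Theorem~3.2]{hardt1985area}, and the uniform expansion \eqref{eq:slow-weighted-C2} from the already-recorded asymptotics \eqref{eq:power-asymptotic} with the $O_2$ control of \cite[Lemma~4.6(2)]{wang2020deformations}, choosing $\varepsilon$ and $R_1$ uniformly over the two sides as you do. The point you flag as the main obstacle, that the leading term is a first-mode term, is already contained in the cited Hardt--Simon expansion \eqref{eq:power-asymptotic}, so your projection argument is a harmless supplement rather than a needed step.
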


\subsection*{Foliation coordinates and homogeneous fields}

Next, we introduce the scaling identities used in the construction of the
ambient vector fields. Every $S_\lambda$ with $\lambda\ne0$ is a smooth minimal hypersurface.  For $t>0$, $z\in\Sigma_\pm$,
$\tau\in\R$, and
$V\in T_z\Sigma_\pm$, differentiation of the map \eqref{eq:foliation-map} gives
\begin{equation}\label{eq:Phi-differential}
    (D\Phi_\pm)_{(t,z)}(\tau,V)=\tau z+tV.
\end{equation}
Hence,
\begin{equation}\label{eq:scaled-leaf-geometry}
    T_{tz}S_{\pm t}=T_{tz}(t\Sigma_\pm)=tT_z\Sigma_\pm,
    \qquad
    \nu_{\pm t}(tz)=\nu_{t\Sigma_\pm}(tz)=\nu_\pm(z).
\end{equation}

Let $e_1,\ldots,e_n$ be an oriented orthonormal basis of
$T_z\Sigma_\pm$.  
The coordinate vectors under $D\Phi_\pm$ are
$z,te_1,\ldots,te_n$.  Thus its Jacobian is
\begin{equation}\label{eq:Phi-Jacobian}
\begin{aligned}
    J_{\Phi_\pm}(t,z)
    &=\left|\det(z,te_1,\ldots,te_n)\right|\\
    &=t^n|z\cdot\nu_\pm(z)|
      =t^n\psi_\pm(z).
\end{aligned}
\end{equation}

Next we compute the divergence formulas for $k$-homogeneous vector fields used below.
Let $Z\in C^1(\Sigma_\pm;T\Sigma_\pm)$ and, for $k\in\R$, define the $k$-homogeneous vector fields 
\[
    Y^{(k)}(tz)=t^kZ(z).
\]
In the coordinates of $\Phi_\pm$, by \eqref{eq:Phi-differential}, we have
\begin{align*}
      (D\Phi_\pm)_{(t,z)}
    \bigl(0,t^{k-1}Z(z)\bigr)
    &=
    t\bigl(t^{k-1}Z(z)\bigr) \\
    &=
    t^kZ(z) \\
    &=
    Y^{(k)}(tz).
\end{align*}

Therefore, we have
$$
\begin{aligned}
    (\diver Y^{(k)})(tz)
    &=
    \frac{1}{t^n\psi_\pm(z)}
    \diver_{\Sigma_\pm}
    \left(
        t^n\psi_\pm(z)t^{k-1}Z(z)
    \right) \\
    &=\frac{t^{k-1}}{\psi_\pm(z)}
      \diver_{\Sigma_\pm}(\psi_\pm Z)(z).
\end{aligned}
$$

In particular, the degree-one case $Y(tz)=tZ(z)$ gives
\begin{equation}\label{eq:scaled-field-divergence}
    (\diver Y)(tz)
    =\frac{1}{\psi_\pm(z)}
      \diver_{\Sigma_\pm}(\psi_\pm Z)(z).
\end{equation}
Define the foliation parameter
\begin{equation}\label{eq:lambda-def}
    \lambda_\pm
    \colon U_\pm\longrightarrow(0,\infty),
    \qquad
    \lambda_\pm(tz)=t.
\end{equation}
Equation \eqref{eq:Phi-differential} implies
$$
    (d\lambda_\pm)_{tz}(\tau z+tV)=\tau.
$$
In particular, $\lambda_\pm$ has no critical points,
$\{\lambda_\pm=s\}=s\Sigma_\pm=S_{\pm s}$, and
\begin{equation}\label{eq:lambda-gradient}
    \nabla\lambda_\pm(tz)
    =\frac{\nu_\pm(z)}{z\cdot\nu_\pm(z)}
    =\pm\frac{\nu_\pm(z)}{\psi_\pm(z)}.
\end{equation}

The unit normals of the leaves converge to the unit normal of the cone, i.e., if
$x_j=t_jz_j\in U_\pm$ converges to
$x\in\C\setminus\{0\}$, where $z_j\in\Sigma_\pm$, then, because $\Sigma_\pm$ is $C^2$-asymptotic to $\C$, we have
\begin{equation}\label{eq:leaf-normal-trace}
    t_j\longrightarrow0,
    \qquad
    |z_j|\longrightarrow\infty,
    \qquad
    \nu_\pm(z_j)\longrightarrow\nu_E(x).
\end{equation}

\section{Proofs of the main theorems}
\label{sec:proof-idea}

In this section, we prove the three main results while using several geometric identities established later.  The proof of Theorem~\ref{thm:main} uses the fields constructed in Section~\ref{sec:field-construction}.  For the sufficiency direction of Theorem~\ref{thm:weighted-characterization}, we use Lemma~\ref{lem:weighted-subcalibration}, whose construction is carried out in Section~\ref{sec:weighted-characterization}; the necessity direction is proved completely here.

\subsection{Proof of Theorem \ref{thm:main}}

We first consider the unit normal field of the foliation leaves.
Every $x\in\R^{n+1}\setminus\{0\}$ lies on a unique member of the signed
family $\{S_\lambda\}_{\lambda\in\R}$.  We define $X(x)$ to be the
oriented unit normal $\nu_\lambda(x)$ to that leaf.  Thus $X$ records the
normal direction of the entire foliation in a single field.  In Lemma \ref{lem:calibration}, we see that 
\begin{align*}
    X|_{S_\lambda}&=\nu_\lambda
        &&\text{for }\lambda\ne0,\\
    X&=\nu_E
        &&\text{on }\C\setminus\{0\},\\
    \diver X&=0
        &&\text{in }\mathcal D'(\R^{n+1}).
\end{align*}
Thus $X$ is the calibration determined by these foliations.

Since $\diver X=0$,
Lemma~\ref{lem:calibration-identity} (see also \cite[Proposition~4.1]{de2014sharp}) gives, for every set $F$ of locally
finite perimeter satisfying $F\mathbin\triangle E\Subset B_1$,
\begin{align*}
    D_1(F)
    &=\int_{\partial^*F\cap B_1}
      (1-X\cdot\nu_F)\,d\cH^n\\
    &=\frac12\int_{\partial^*F\cap B_1}
      |\nu_F-X|^2\,d\cH^n.
\end{align*}

Next, we use a second vector field $Y_\pm$ on each side of the cone (Lemma \ref{lem:ambient-Y}).  For each
sign $\pm$, we construct a smooth vector field $Y_\pm$ on $U_\pm$ such
that
\begin{align*}
    \diver Y_\pm&=1&&\text{in }U_\pm,\\
    Y_\pm\cdot X&=0&&\text{in }U_\pm,\\
    |Y_\pm(x)|&\leq C_{\C}|x|&&\text{for }x\in U_\pm.
\end{align*}
Since $X$ is the unit normal to the leaves, the identity
$Y_\pm\cdot X=0$ means that $Y_\pm$ is tangent to every leaf $S_\lambda$
on the corresponding side. And $\diver Y_\pm=1$ recovers volume
as a boundary flux.
Therefore, let $U$ be either $U_+$ or $U_-$, let $Y$ be the corresponding field, and suppose that $A\subset U$ has locally finite perimeter and $\spt\chi_A\Subset B_1$.  Then by Lemma \ref{lem:volume-flux},
\begin{equation*}
    \cL^{n+1}(A)
    =\int_{\partial^*A\cap U}Y\cdot\nu_A\,d\cH^n.
\end{equation*}
Thus the field $Y_\pm$ measures the volume on each side of the cone.

We now prove Theorem~\ref{thm:main} using the properties above.  We
consider the case $R=1$.  The general case then follows by scaling.

\begin{proof}[Proof of Theorem~\ref{thm:main}]
Write
$$
    D=D_1(F),
    \qquad
    A_+=F\setminus E,
    \qquad
    A_-=E\setminus F.
$$
Both $A_+$ and $A_-$ have locally finite perimeter.
Since the cone $\C$ has zero $(n+1)$-dimensional measure, up to null sets
we may write
$$
    A_+=F\cap U_+,
    \qquad
    A_-=F^c\cap U_-.
$$
Moreover,
$\spt\chi_{A_\pm}\Subset B_1$.  By the locality of the reduced boundary \cite[Remark 15.2]{maggi2012sets},
up to $\cH^n$-null sets,
$$
\begin{aligned}
    \partial^*A_+\cap U_+&=\partial^*F\cap U_+,
    &\nu_{A_+}&=\nu_F,\\
    \partial^*A_-\cap U_-&=\partial^*F\cap U_-,
    &\nu_{A_-}&=-\nu_F.
\end{aligned}
$$
Applying Lemma \ref{lem:volume-flux} separately to $A_+$ and $A_-$, we get 
\begin{align}
    \cL^{n+1}(A_+)
    &=\int_{\partial^*F\cap U_+}
      Y_+\cdot\nu_F\,d\cH^n,\label{eq:Aplus}\\
    \cL^{n+1}(A_-)
    &=-\int_{\partial^*F\cap U_-}
      Y_-\cdot\nu_F\,d\cH^n.\label{eq:Aminus}
\end{align}
Hence, adding
\eqref{eq:Aplus} and \eqref{eq:Aminus},
\begin{align*}
    \cL^{n+1}(F\mathbin\triangle E)
    &=\cL^{n+1}(A_+)+\cL^{n+1}(A_-)\\
    &=\int_{\partial^*F\cap U_+}Y_+\cdot\nu_F\,d\cH^n
      -\int_{\partial^*F\cap U_-}Y_-\cdot\nu_F\,d\cH^n\\
    &=\int_{\partial^*F\cap U_+}
        Y_+\cdot(\nu_F-X)\,d\cH^n
      -\int_{\partial^*F\cap U_-}
        Y_-\cdot(\nu_F-X)\,d\cH^n.
\end{align*}
The last equality follows from $Y_\pm\cdot X=0$.

Let $K=\spt(\chi_F-\chi_E)\Subset B_1$. Consider $O=(U_+\cup U_-)\setminus K.$
The sets $F$ and $E$ agree almost
everywhere in $O$. Hence,
$$
    \operatorname{Per}(F;O)
    =
    \operatorname{Per}(E;O)
    =
    0,
$$
where the last equality follows because $E$ is one of
$U_+$ and $U_-$. Therefore, by De Giorgi's structure theorem,
$$
    \cH^n\left(
      \bigl(\partial^*F\cap(U_+\cup U_-)\bigr)\setminus K
    \right)
    =
    \operatorname{Per}(F;O)
    =
    0.
$$

Thus the two boundary integrals above are supported in $K$.
Since $|Y_\pm(x)|\leq C_{\C}|x|\leq C_{\C}$ on $B_1$, the triangle inequality gives
\begin{equation*}
\begin{aligned}
    \cL^{n+1}(F\mathbin\triangle E)
    &\leq
      \int_{\partial^*F\cap U_+}
        |Y_+|\,|\nu_F-X|\,d\cH^n
      +\int_{\partial^*F\cap U_-}
        |Y_-|\,|\nu_F-X|\,d\cH^n\\
    &\leq C_{\C}
      \int_{\partial^*F\cap(U_+\cup U_-)}
        |\nu_F-X|\,d\cH^n.
\end{aligned}
\end{equation*}
Next, Cauchy--Schwarz and
$\cH^n(\partial^*F\cap B_1)=\Per(F;B_1)$ yield
\begin{align*}
    \cL^{n+1}(F\mathbin\triangle E)
    &\leq C_{\C}\Per(F;B_1)^{1/2}
      \left(
        \int_{\partial^*F\cap B_1}|\nu_F-X|^2\,d\cH^n
      \right)^{1/2}\\
    &\leq C_{\C}\Per(F;B_1)^{1/2}D^{1/2},
\end{align*}
where we use Lemma \ref{lem:calibration-identity} in the last inequality.

If $D\leq1$, then
$$
    \Per(F;B_1)=\Per(E;B_1)+D\leq\Per(E;B_1)+1 \le C_\C.
$$
Thus
\begin{equation}\label{eq:small-D}
    \cL^{n+1}(F\mathbin\triangle E)^2\leq C_{\C}D.
\end{equation}
If $D>1$, then we get the following trivial inequality:
$$
    \cL^{n+1}(F\mathbin\triangle E)^2
    \leq\cL^{n+1}(B_1)^2D.
$$
 This proves
\eqref{eq:main} for $R=1$.
\end{proof}

\subsection{Proof of Theorem \ref{thm:ordinary-Dirichlet-spectrum}}
\label{subsec:ordinary-jacobi-proof}
The arguments in \cite{de2014sharp,liu2019stability} use the geometry of
the Lawson cones.  Here we give a spectral proof that uses only the
stability of the regular hypercone.

\begin{proof}[Proof of Theorem~\ref{thm:ordinary-Dirichlet-spectrum}]
We first justify the equivalence of the two test classes appearing in
\eqref{eq:intro-ordinary-eigenvalue} and
\eqref{eq:test-class-equivalence}.  
Suppose first that $n>2$. We can use a cutoff argument, just as in the proof of \cite[Theorem 3]{de2014sharp}.

It remains to consider $n=2$.  Stability gives $b_1^2=\mu_1\geq0.$
Thus $\mu_1=0$ and $A_S\equiv0$.  Since the link is connected, it is a
great circle and $\C$ is a plane, so
\[
    Q_\C(\varphi)
    =
    \int_{\C\cap B_R}|\nabla_\C\varphi|^2\,d\cH^2.
\]
Define the Lipschitz cutoff
\[
    \chi_\varepsilon(r)
    =
    \begin{cases}
        0,
        &r\leq\varepsilon^2,\\[2mm]
        \displaystyle
        \frac{\log(r/\varepsilon^2)}{|\log\varepsilon|},
        &\varepsilon^2<r<\varepsilon,\\[3mm]
        1,
        &r\geq\varepsilon.
    \end{cases}
\]
After smoothing it near $r=\varepsilon^2$ and $r=\varepsilon$, we obtain
a $C^1$ cutoff, still denoted by $\chi_\varepsilon$, such that
\begin{equation}\label{eq:L^2 of grad cutoff}
    \int_\C|\nabla_\C\chi_\varepsilon|^2\,d\cH^2
    \leq\frac{C}{|\log\varepsilon|}
    \longrightarrow0.
\end{equation}
For
$\varphi_\varepsilon=\chi_\varepsilon\varphi$, dominated convergence and
\eqref{eq:L^2 of grad cutoff} give convergence in $L^2$ and in the
quadratic-form norm.  We have therefore proved
\begin{equation}\label{eq:test-class-equivalence}
    \lambda_\C^D(R)
    =
    \inf_{\substack{0\ne\varphi\in
        C_c^1((\C\setminus\{0\})\cap B_R)}}
      \frac{Q_\C(\varphi)}
      {\displaystyle\int_\C\varphi^2\,d\cH^n}.
\end{equation}

Combining \eqref{eq:test-class-equivalence} with
\eqref{eq:punctured-first-mode-reduction} and
\eqref{eq:radial-positive-lower-bound}, we obtain
\[
    \lambda_\C^D(R)
    =
    \Lambda_1(R)
    \geq
    \frac4{R^2}>0.
\]
It remains to identify the exact value of this one-dimensional infimum.
To construct a positive comparison solution for
\eqref{eq:first-radial-Rayleigh-quotient}, fix $\lambda>0$ and consider the Euler Lagrange equation of \eqref{eq:first-radial-Rayleigh-quotient}
\[
    -(r^{n-1}u')'+\mu_1r^{n-3}u
    =
    \lambda r^{n-1}u.
\]
Equivalently,
\begin{equation}\label{eq:CHS-radial-eigenvalue-equation}
    r^2u''+(n-1)ru'+(\lambda r^2-\mu_1)u=0.
\end{equation}

Next we transform \eqref{eq:CHS-radial-eigenvalue-equation} to a standard Bessel equation. Denote $\alpha = \frac{n-2}{2}.$ Consider the change of variables
\[
    z=\sqrt{\lambda }r,
    \qquad
    u(r)=r^{-\alpha}V(z).
\]
A direct computation gives
\[
    r^2u''+(n-1)ru'
    =
    r^{-\alpha}
    \left(z^2V''+zV'-\alpha^2V\right).
\]
Since $b_1^2=\alpha^2+\mu_1$, equation
\eqref{eq:CHS-radial-eigenvalue-equation} becomes
\[
    z^2V''+zV'+(z^2-b_1^2)V=0,
\]
which is the Bessel equation of order $b_1$.  The Bessel function of the
first kind (see, e.g., \cite[Chapter~10]{ahmad2015textbook}) has the expansion
\[
    J_{b_1}(z)
    =
    \frac{1}{\Gamma(b_1+1)}
    \left(\frac z2\right)^{b_1}
    \bigl(1+O(z^2)\bigr)
    \qquad\text{as }z\downarrow0.
\]
Consequently,
\[
    r^{-\alpha}J_{b_1}(\sqrt{\lambda} r)
    \sim c\,r^{-\alpha+b_1}
    =c\,r^{\gamma_+}
    \qquad\text{as }r\downarrow0.
\]
Thus, when $b_1>0$, the Bessel function $J_{b_1}$ selects exactly the
$\gamma_+$-branch of the homogeneous radial Jacobi equation used in
\cite{caffarelli1984minimal}; when $b_1=0$, it selects the
nonlogarithmic solution associated with the repeated indicial root.

Let $j_{b_1,1}$ be the first positive zero of $J_{b_1}$ and define
\begin{equation}\label{eq:first-radial-Bessel-solution}
    U_R(r)
    =
    r^{-\alpha}
    J_{b_1}\left(j_{b_1,1}\frac rR\right).
\end{equation}
Then
\[
    -(r^{n-1}U_R')'+\mu_1r^{n-3}U_R
    =
    \frac{j_{b_1,1}^2}{R^2}r^{n-1}U_R,
\]
and, because $j_{b_1,1}$ is the first positive zero,
\[
    U_R>0\quad\text{on }(0,R),
    \qquad
    U_R(R)=0.
\]
Hence $j_{b_1,1}^2/R^2$ is the natural candidate for
$\Lambda_1(R)$ and $ U_R$ the first (positive) solution.  The differential equation alone does not yet identify
the variational infimum because
$U_R\notin C_c^1((0,R))$.  We now prove the equality directly, using the
positive solution $U_R$ and cutoff approximations at the two endpoints.

First we denote
\[
    \lambda_0
    =
    \frac{j_{b_1,1}^2}{R^2}.
\]
The function $U_R$ defined in
\eqref{eq:first-radial-Bessel-solution} is positive on $(0,R)$ and
satisfies
\[
    -(r^{n-1}U_R')'
    +\mu_1r^{n-3}U_R
    =
    \lambda_0r^{n-1}U_R.
\]

For $0\ne u\in C_c^1((0,R))$, write
\[
    u=U_Rf,
    \qquad
    f=\frac{u}{U_R}.
\]

Expanding $u'$ and using the equation for $U_R$
gives
\begin{align*}
&r^{n-1}|u'|^2
+\mu_1r^{n-3}u^2
-\lambda_0r^{n-1}u^2\\
&=
r^{n-1}\left(U_R'f+U_Rf'\right)^2
+
\left(\mu_1r^{n-3}-\lambda_0r^{n-1}\right)
U_R^2f^2\\
&=
r^{n-1}U_R^2|f'|^2
+2r^{n-1}U_RU_R'ff'
+r^{n-1}|U_R'|^2f^2
+U_R\left(r^{n-1}U_R'\right)'f^2\\
&=
r^{n-1}U_R^2|f'|^2
+2r^{n-1}U_RU_R'ff'
+\left(r^{n-1}U_RU_R'\right)'f^2\\
&=
r^{n-1}U_R^2|f'|^2
+\left(r^{n-1}U_RU_R'f^2\right)'.
\end{align*}
After integration, the last term vanishes because $f$ is compactly
supported in $(0,R)$.  Therefore
\begin{equation}\label{eq:first-radial-positive-solution-identity}
\begin{aligned}
&\int_0^R
\left(
r^{n-1}|u'|^2+\mu_1r^{n-3}u^2
\right)\,dr
-\lambda_0\int_0^Rr^{n-1}u^2\,dr\\
&\qquad=
\int_0^R
r^{n-1}U_R^2
\left|
\left(\frac{u}{U_R}\right)'
\right|^2\,dr
\geq0.
\end{aligned}
\end{equation}
Taking the infimum over $u$ gives
\[
    \Lambda_1(R)\geq\lambda_0.
\]

For the reverse inequality, we approximate $U_R$ by functions in
$C_c^1((0,R))$.  Fix
\[
    0<\varepsilon<\min\{1,R/4\}.
\]
When $b_1>0$, choose a smooth cutoff
$\chi_\varepsilon^0:(0,R)\to[0,1]$ such that
\[
    \chi_\varepsilon^0=0
    \quad\text{on }(0,\varepsilon],
    \qquad
    \chi_\varepsilon^0=1
    \quad\text{on }[2\varepsilon,R),
    \qquad
    |(\chi_\varepsilon^0)'|
    \leq\frac{C}{\varepsilon}.
\]
When $b_1=0$, again we use instead a smoothed logarithmic cutoff which, before
smoothing, is given by
\[
    \chi_\varepsilon^0(r)
    =
    \begin{cases}
        0,
        &r\leq\varepsilon^2,\\[2mm]
        \displaystyle
        \frac{\log(r/\varepsilon^2)}{|\log\varepsilon|},
        &\varepsilon^2<r<\varepsilon,\\[3mm]
        1,
        &r\geq\varepsilon.
    \end{cases}
\]
After smoothing, we retain
\[
    0\leq\chi_\varepsilon^0\leq1,
    \qquad
    |(\chi_\varepsilon^0)'(r)|
    \leq\frac{C}{r|\log\varepsilon|}
\]
on the transition region.  Also choose a smooth cutoff
$\chi_\varepsilon^R:(0,R)\to[0,1]$ such that
\[
    \chi_\varepsilon^R=1
    \quad\text{on }(0,R-2\varepsilon],
    \qquad
    \chi_\varepsilon^R=0
    \quad\text{on }[R-\varepsilon,R),
    \qquad
    |(\chi_\varepsilon^R)'|
    \leq\frac{C}{\varepsilon}.
\]
Set
\[
    \eta_\varepsilon
    =
    \chi_\varepsilon^0\chi_\varepsilon^R,
    \qquad
    u_\varepsilon
    =
    \eta_\varepsilon U_R.
\]
After the indicated smoothing,
$u_\varepsilon\in C_c^1((0,R))$.

The expansion of $J_{b_1}$ at the origin gives
\[
    r^{n-1}U_R(r)^2
    \leq Cr^{1+2b_1}
    \qquad
    \text{for }r\text{ sufficiently small}.
\]
Moreover, $U_R(R)=0$ and $U_R$ is smooth near the regular endpoint
$r=R$, so
\[
    r^{n-1}U_R(r)^2
    \leq C(R-r)^2
    \qquad
    \text{for }r\text{ sufficiently close to }R.
\]

Because \(\eta_\varepsilon'
=
(\chi_\varepsilon^0)'\chi_\varepsilon^R
+
\chi_\varepsilon^0(\chi_\varepsilon^R)'\), \(|\eta_\varepsilon'|^2
\leq
2|(\chi_\varepsilon^0)'|^2
+
2|(\chi_\varepsilon^R)'|^2.\)
It follows that
\[
\int_0^R
r^{n-1}U_R^2|\eta_\varepsilon'|^2\,dr
\leq
\begin{cases}
C\bigl(\varepsilon^{2b_1}+\varepsilon\bigr),
&b_1>0,\\[2mm]
C\bigl(|\log\varepsilon|^{-1}+\varepsilon\bigr),
&b_1=0.
\end{cases}
\]
In either case, the right-hand side tends to zero.  On the other hand,
dominated convergence gives
\[
    \int_0^Rr^{n-1}u_\varepsilon^2\,dr
    \longrightarrow
    \int_0^Rr^{n-1}U_R^2\,dr
    >0.
\]
Applying \eqref{eq:first-radial-positive-solution-identity} to
$u_\varepsilon$ therefore yields
\[
\begin{aligned}
&\frac{\displaystyle
\int_0^R
\left(
r^{n-1}|u_\varepsilon'|^2
+\mu_1r^{n-3}u_\varepsilon^2
\right)\,dr}
{\displaystyle
\int_0^Rr^{n-1}u_\varepsilon^2\,dr}\\
&\qquad=
\lambda_0+
\frac{\displaystyle
\int_0^Rr^{n-1}U_R^2
|\eta_\varepsilon'|^2\,dr}
{\displaystyle
\int_0^Rr^{n-1}u_\varepsilon^2\,dr}
\longrightarrow\lambda_0.
\end{aligned}
\]
Thus
\[
    \Lambda_1(R)\leq\lambda_0.
\]
Combining the two inequalities and using
\eqref{eq:radial-positive-lower-bound}, we obtain
\begin{equation}\label{eq:first-radial-exact-value}
    \Lambda_1(R)
    =
    \frac{j_{b_1,1}^2}{R^2}
    \geq\frac4{R^2}>0.
\end{equation}

Hence we have proved that
\begin{equation}\label{eq:ordinary-Dirichlet-exact-value}
    \lambda_\C^D(R)
    =
    \frac{\lambda_\C^D(1)}{R^2}
    =
    \frac{j_{b_1,1}^2}{R^2}.
\end{equation}

Finally, it remains to identify the optimal constant in the
$|x|^{-2}$-weighted Jacobi inequality.  For
$\varphi\in C_c^1(\C\setminus\{0\})$, again by \eqref{eq:Dirichlet-mode-L2} and \eqref{eq:original-radial-Hardy-identity}
\[
\begin{aligned}
    Q_\C(\varphi)
    &=
    \sum_j\int_0^\infty
      \left(
        r\left|\left(r^\alpha u_j\right)'\right|^2
        +b_j^2r^{n-3}u_j^2
      \right)\,dr,\\
    \int_\C\frac{\varphi^2}{|x|^2}\,d\cH^n
    &=
    \sum_j\int_0^\infty r^{n-3}u_j^2\,dr.
\end{aligned}
\]
Since $b_j\geq b_1$, it follows that
\[
    Q_\C(\varphi)
    \geq
    b_1^2\int_\C\frac{\varphi^2}{|x|^2}\,d\cH^n.
\]
Finally, fix a nonzero $\eta\in C_c^1(\R)$ and set
\[
    \varphi_L(r,\omega)
    =
    r^{-\alpha}
    \eta\left(\frac{\log r}{L}\right)w_1(\omega).
\]
Then
\[
    \frac{Q_\C(\varphi_L)}
    {\displaystyle
      \int_\C\frac{\varphi_L^2}{|x|^2}\,d\cH^n}
    =
    b_1^2
    +
    \frac1{L^2}
    \frac{\displaystyle\int_\R|\eta'|^2}
         {\displaystyle\int_\R\eta^2}
    \longrightarrow b_1^2.
\]
Thus both constants asserted in the theorem are optimal, and
$b_1^2>0$ exactly when $\C$ is strictly stable.
\end{proof}

\begin{remark}
Consider the classical Simons cone
\[
    \C
    =
    M_{4,4}
    =
    \left\{
        (x,y)\in\R^4\times\R^4:
        |x|=|y|
    \right\}
    \subset\R^8.
\]
\cite[Theorem~3, equation~(1.22)]{de2014sharp}
provides the explicit lower bound
\[
    \lambda_\C^D(R)
    \geq
    \frac{\sqrt2}{16R^2}.
\]

On the other hand, the exact value can be computed from
Theorem~\ref{thm:ordinary-Dirichlet-spectrum}.  Here \(n=7\),
and the link is
\[
    S
    =
    \mathbb S^3\left(\frac1{\sqrt2}\right)
    \times
    \mathbb S^3\left(\frac1{\sqrt2}\right)
    \subset\mathbb S^7.
\]
Hence by \cite[p.~94]{simon1982isolated}, $\mu_1=-6,$ and
\[
    b_1
    =
    \sqrt{\frac{(n-2)^2}{4}+\mu_1}
    =
    \sqrt{\frac{25}{4}-6}
    =
    \frac12.
\]
The identity
\[
    J_{1/2}(z)
    =
    \sqrt{\frac{2}{\pi z}}\sin z
\]
shows that
\[
    j_{1/2,1}=\pi.
\]
Therefore by Theorem~\ref{thm:ordinary-Dirichlet-spectrum}, the
exact value is
\[
    \lambda_\C^D(R)
    =
    \frac{\pi^2}{R^2}.
\]
Thus the coefficient \(\sqrt2/16\approx0.0884\) obtained in
\cite{de2014sharp} is a valid but nonoptimal lower bound, whereas the
optimal coefficient is
\[
    \pi^2\approx9.8696.
\]
\end{remark}
\subsection{Proof of Theorem \ref{thm:weighted-characterization}}
\label{subsec:proof-weighted-characterization}

We shall repeatedly use the weight
$$
    q_\C(x)=\frac{\dist(x,\C)}{|x|^2}.
$$
Since $\C$ is a cone,
\begin{equation}\label{eq:weight-homogeneity}
    q_\C(tx)=t^{-1}q_\C(x)
    \qquad(t>0).
\end{equation}
Moreover, $q_\C(x)\leq|x|^{-1}$, so
$q_\C\in L^1_{\mathrm{loc}}(\R^{n+1})$. By \cite[Proposition~4.1]{de2014sharp} and
\cite[equation~(3)]{liu2019stability}, a quantitative subcalibration with
the appropriate signed bulk divergence gives the weighted quantitative
inequality.  We will follow this strategy for the sufficiency direction of
Theorem~\ref{thm:weighted-characterization}.  The new
task is to construct such a field without assuming the explicit symmetry of the
Lawson cones; this construction is carried out in
Lemma~\ref{lem:weighted-subcalibration}.  Suppose that $\C$ is strictly
stable and strictly minimizing.  We will construct a vector field $\mathcal X
    \in C^0(\R^{n+1}\setminus\{0\};\mathbb S^n),$
smooth in $U_+\cup U_-$, such that
\begin{align*}
    \mathcal X&=\nu_E
      &&\text{on }\C\setminus\{0\},\\
    \diver\mathcal X&\geq c_\C q_\C
      &&\text{in }U_+,\\
    \diver\mathcal X&\leq-c_\C q_\C
      &&\text{in }U_-,\\
    |\diver\mathcal X|&\leq C_\C q_\C
    &&\text{in }U_+\cup U_-.
\end{align*}

\begin{proof}[Proof of (a) $\Rightarrow$ (b)]
Let $\mathcal X$ be the field from
Lemma~\ref{lem:weighted-subcalibration}.  The quantitative-calibration
identity of \cite[Proposition~4.1]{de2014sharp}, in the notation used here,
is
\begin{align}
    D_R(F)
    ={}&\int_{F\mathbin\triangle E}
      |\diver\mathcal X|\,dx\notag\\
    &+\int_{\partial^*F\cap B_R}
      (1-\mathcal X\cdot\nu_F)\,d\cH^n.
      \label{eq:weighted-quantitative-calibration-identity}
\end{align}
The identity in \cite[Proposition~4.1]{de2014sharp} is stated for
$W^{1,1}_{\mathrm{loc}}$ fields. By the same mollification arguments as those used in Lemma~\ref{lem:calibration-identity}, it remains valid for the field in
Lemma~\ref{lem:weighted-subcalibration}: mollify $\mathcal X$ and use its
distributional divergence.  The mollifications converge uniformly on
compact subsets away from the origin, and the mollified divergences converge in
$L^1$ on compact sets because $\diver\mathcal X\in L^1_{\mathrm{loc}}$.
Thus the classical integration-by-parts identity passes to the limit and
gives \eqref{eq:weighted-quantitative-calibration-identity}.

The boundary term in
\eqref{eq:weighted-quantitative-calibration-identity} is nonnegative.
The signed estimates
\eqref{eq:weighted-subcalibration-plus} and 
\eqref{eq:weighted-subcalibration-minus} therefore give
\[
    D_R(F)
    \geq\int_{F\mathbin\triangle E}
       |\diver\mathcal X|\,dx
    \geq c_\C\mathcal W_R(F,E),
\]
as required.
\end{proof}

\begin{proof}[Proof of (b) $\Rightarrow$ (a)]
\textbf{Necessity of strict stability:} Fix $\varphi\in C_c^2(\C\setminus\{0\})$, and choose $R$ with
$\spt\varphi\Subset B_R$.  For small $t$, let $F_t$ agree with $E$
away from $\spt\varphi$ and have boundary
\[
    \{x+t\varphi(x)\nu_E(x):x\in\C\}
\]
near the support.  The perimeter expansion is
\begin{equation}\label{eq:weighted-normal-area-expansion}
    D_R(F_t)=\frac{t^2}{2}Q_\C(\varphi)+o(t^2).
\end{equation}
In a sufficiently small conical tubular neighborhood,
\[
    \dist(x+s\nu_E(x),\C)=|s|,
    \qquad
    |x+s\nu_E(x)|^2=|x|^2+s^2.
\]
The normal-coordinate Jacobian is $\det(I-sA_\C(x))$, and therefore
\begin{align}
    \mathcal W_R(F_t,E)
    &=
    \int_\C\int_0^{|t\varphi(x)|}
      \frac{s}{|x|^2+s^2}
      \det\left(
        I-\operatorname{sgn}(t\varphi(x))sA_\C(x)
      \right)ds\,d\cH^n(x)\notag\\
    &=\frac{t^2}{2}
      \int_\C\frac{\varphi^2}{|x|^2}\,d\cH^n+o(t^2).
      \label{eq:weighted-normal-W-expansion}
\end{align}
Apply \eqref{eq:weighted-inequality} and let
$t\to0$.  We obtain
\[
    Q_\C(\varphi)
    \geq c_\C\int_\C\frac{\varphi^2}{|x|^2}\,d\cH^n.
\]

\textbf{Necessity of strict minimality:}
We prove by contradiction. Suppose that $\C$ is not strictly minimizing.
By the strict-stability estimate proved above, we have $b_1^2\geq c_\C>0.$
By Proposition~\ref{prop:HS-root-criterion}, we assume without loss of generality that the fast leaf is
$\Sigma_+\subset U_+$ and has the fast exponent
\[
    \gamma
    =
    \gamma_-
    =
    -\frac{n-2}{2}-b_1,
    \qquad b_1>0.
\]

We suppress the sign and write the graph of $\Sigma$ by
\begin{equation}\label{eq:fast-leaf-expansion}
    G(r,\omega)
    =
    r\omega+h(r,\omega)\nu_E(\omega),
    \qquad
    h(r,\omega)
    =
    a r^\gamma w_1(\omega)
    +O_2(r^{\gamma-\varepsilon}),
\end{equation}
where $a>0$. We denote $\Omega_\Sigma$ the set such that
\[
    E\subset\Omega_\Sigma,
    \qquad
    \partial\Omega_\Sigma=\Sigma,
\]
with outer unit normal $\nu_\Sigma$.

Let $X$ be the foliation calibration from
Lemma~\ref{lem:calibration}. By Lemma~\ref{lem:calibration-identity}, after scaling we have
\begin{equation}\label{eq:fast-scaled-calibration-identity}
    D_\rho(F)
    =
    \frac12
    \int_{\partial^*F\cap B_\rho}
      |\nu_F-X|^2\,d\cH^n
\end{equation}
whenever $F\mathbin\triangle E\Subset B_\rho$.

Choose $\eta\in C^\infty([0,\infty);[0,1])$ such that
\[
    \eta=1\quad\text{on }[0,1],
    \qquad
    \eta=0\quad\text{on }[2,\infty),
\]
and define
\begin{equation}\label{eq:fast-leaf-cutoff}
    u_R(r,\omega)
    =
    \eta(r/R)h(r,\omega).
\end{equation}
Let $M_R$ be the hypersurface that coincides with $\Sigma$ for $r \leq R$ and is parametrized, for $r>R$, by the graph
\[
    G_R(r,\omega)
    =
    r\omega+u_R(r,\omega)\nu_E(\omega),
    \qquad
    r>R.
\]

For all sufficiently large $R$, the hypersurface $M_R$ bounds an open set $F_R$ satisfying
\begin{equation}\label{eq:fast-competitor-support}
    E\subset F_R,
    \qquad
    \partial F_R=M_R,
    \qquad
    F_R\mathbin\triangle E\Subset B_{4R}.
\end{equation}

Applying \eqref{eq:fast-scaled-calibration-identity} gives
\begin{equation}\label{eq:fast-cutoff-deficit-identity}
    D_{4R}(F_R)
    =
    \frac12
    \int_{M_R\cap B_{4R}}
      |\nu_{F_R}-X|^2\,d\cH^n.
\end{equation}
On the retained part of $\Sigma$ ($r<R$), we have
\[
    \nu_{F_R}=X=\nu_\Sigma,
\]
while on the retained part of $\C$ ($r>2R$), we have
\[
    \nu_{F_R}=X=\nu_E.
\]

Uniformly for $(r,\omega)\in[R,2R]\times S$,
\eqref{eq:fast-leaf-expansion} and
\eqref{eq:fast-leaf-cutoff} give
\begin{equation}\label{eq:fast-transition-data}
    \frac{|u_R(r,\omega)|}{r}
    +
    |\nabla_\C u_R(r,\omega)|
    \leq
    CR^{\gamma-1}.
\end{equation}
Then by \eqref{eq:fast-leaf-expansion} (or \cite[Lemma 4.6]{wang2020deformations}), we have
\begin{equation}\label{eq:fast-normal-cone}
    |\nu_{F_R}(G_R(r,\omega))-\nu_E(\omega)|
    \leq
    CR^{\gamma-1}.
\end{equation}

Next we evaluate $|X(G_R(r,\omega))-\nu_E(\omega)|$. If $u_R(r,\omega)=0$, then $G_R(r,\omega)\in\C$ and
$X(G_R(r,\omega))=\nu_E(\omega)$.  Otherwise, write the unique
foliation representation
\[
    G_R(r,\omega)
    =
    tG(\rho,\theta),
    \qquad t>0.
\]
% Since
% \[
%     \frac{\dist(G_R(r,\omega),\C)}
%          {|G_R(r,\omega)|}
%     \leq CR^{\gamma-1}\longrightarrow0
% \]
% and this ratio is invariant under dilations, the normalized point
% $G(\rho,\theta)$ lies on the graphical end when $R$ is sufficiently
% large.  
% Uniqueness of the conical normal coordinates gives
So we have
\[
    \theta=\omega,
    \qquad
    r=t\rho,
    \qquad
    u_R(r,\omega)=t h(\rho,\omega).
\]
Consequently,
\[
    \frac{|u_R(r,\omega)|}{r}
    =
    \frac{|h(\rho,\omega)|}{\rho}.
\]
Moreover,
\[
    X(G_R(r,\omega))
    =\nu_{t\Sigma}\bigl(tG(\rho,\omega)\bigr)=
    \nu_\Sigma(G(\rho,\omega)).
\]
By the $C^1$ fast asymptotics,
\[
\begin{aligned}
    |X(G_R(r,\omega))-\nu_E(\omega)|
    &\leq C\rho^{\gamma-1}\\
    &\leq C\frac{|h(\rho,\omega)|}{\rho}\\
    &=C\frac{|u_R(r,\omega)|}{r}\\
    &\leq CR^{\gamma-1},
\end{aligned}
\]

% The fast asymptotics then imply
% \[
%     |\nu_\Sigma(G(\rho,\omega))-\nu_E(\omega)|
%     \leq
%     C\rho^{\gamma-1}
%     \leq
%     C\frac{|h(\rho,\omega)|}{\rho}
%     =
%     C\frac{|u_R(r,\omega)|}{r},
% \]
% and
% \[
%     X(G_R(r,\omega))
%     =
%     \nu_\Sigma(G(\rho,\omega)).
% \]

Combining with \eqref{eq:fast-normal-cone}, we obtain
\begin{equation}\label{eq:fast-calibration-error}
    |\nu_{F_R}-X|
    \leq
    CR^{\gamma-1}
\end{equation}
uniformly for $r\in [R,2R]$. In addition, the area is at most $CR^n$, and hence
\eqref{eq:fast-cutoff-deficit-identity} gives
\begin{align}
    D_{4R}(F_R)
    &\leq
    CR^nR^{2\gamma-2}\notag\\
    &=
    CR^{n+2\gamma_--2}
    =
    CR^{-2b_1}.
    \label{eq:fast-deficit}
\end{align}

On the other hand, there is clearly a uniform weighted volume lower bound, i.e., there is a uniform $c_0>0$ such that
\begin{equation}\label{eq:fast-weight-lower}
    \mathcal W_{4R}(F_R,E)
    \geq
    c_0.
\end{equation}

Letting $R\to\infty$ leads to a contradiction, since $b_1>0$.
\end{proof}

\section{The vector fields and the identity properties}
\label{sec:field-construction}

The foliation idea in \cite[Section~1.4]{de2014sharp} is the starting point
for the calibration field in this section.  We first construct the field
$X$ and prove the exact formula for the perimeter deficit.  We then solve a
weighted divergence equation on each leaf $\Sigma_\pm$ and use it to
construct the fields $Y_\pm$. The resulting statements are given in Lemmas~\ref{lem:calibration}, \ref{lem:ambient-Y}, and~\ref{lem:calibration-identity}, and were used in the proofs in Section~\ref{sec:proof-idea}.

\subsection*{Construction of the vector fields}
We will construct vector fields $X,Y_\pm$ which satisfy the following properties.
\begin{lemma}\label{lem:calibration}
Define the vector field $X$ by 
\begin{align}
     X|_{S_\lambda}&=\nu_\lambda
        &&\text{for }\lambda\ne0,\label{eq:X-def}\\
    X&=\nu_E
        &&\text{on }\C\setminus\{0\},\label{eq:X-trace}
\end{align}
Then $X\in C^0(\R^{n+1}\setminus\{0\};\mathbb S^n)$
and
\begin{align}
    \diver X&=0
        &&\text{in }\mathcal D'(\R^{n+1}).\label{eq:X-div}
\end{align}
\end{lemma}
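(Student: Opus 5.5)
Three things must be established: that $X$ is well defined, that it is continuous away from the origin, and that its distributional divergence vanishes on all of $\R^{n+1}$, including across the cone and at the origin.

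\textbf{Well-definedness and smoothness off the cone.} Since the signed family $\{S_\lambda\}$ is pairwise disjoint and covers $\R^{n+1}$, every $x\ne0$ lies on exactly one $S_\lambda$, so $X$ is well defined on $\R^{n+1}\setminus\{0\}$. On $U_\pm$ it is smooth. Indeed, by \eqref{eq:lambda-gradient},
\[
X=\pm\frac{\nabla\lambda_\pm}{|\nabla\lambda_\pm|}
\]
(with the sign fixed by the chosen orientations $\nu_\pm$), and $\lambda_\pm$ is smooth with no critical points because $\Phi_\pm$ is a diffeomorphism. Equivalently, $X(tz)=\nu_\pm(z)$ is the $0$-homogeneous extension of a smooth field on $\Sigma_\pm$.

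\textbf{Continuity across $\C\setminus\{0\}$.} Here I use \eqref{eq:leaf-normal-trace}. If $x_j\in U_\pm$ and $x_j\to x\in\C\setminus\{0\}$, then $X(x_j)=\nu_\pm(z_j)\to\nu_E(x)$. Continuity along $\C$ itself is clear. The one point to check is that the orientation conventions agree: the normal $\nu_\pm$, chosen so that $\pm z\cdot\nu_\pm>0$, must converge to $\nu_E$ rather than $-\nu_E$. I would verify this from the graph description \eqref{eq:end-parametrization}. On the $+$ side, $\Sigma_+$ lies outside $E$, so $h_+>0$ and the leaves move in the $\nu_E$ direction as $t$ increases. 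Hence $\nu_+\approx\nu_E$ and $x\cdot\nu_+\approx h_+ -$ (lower-order terms) $>0$, which is consistent with \eqref{eq:HS-psi-comparison}. Similarly $\nu_-\approx\nu_E$ on the other side, since $x\cdot\nu_-\approx h_-<0$ there. So $X\in C^0(\R^{n+1}\setminus\{0\};\mathbb S^n)$.

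\textbf{Vanishing divergence on $U_\pm$.} This is the classical fact that the unit normal field of a foliation by minimal hypersurfaces is divergence free. Writing $X=\nabla\lambda/|\nabla\lambda|$ locally and computing in an orthonormal frame adapted to the leaf through the point,
\[
\diver X=\diver_{S_\lambda}X+\langle \nabla_X X,X\rangle=-H_{S_\lambda}+0=0,
\]
using $|X|=1$ and the minimality of every leaf.

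\textbf{Global distributional divergence (the main obstacle).} Fix $\zeta\in C_c^\infty(\R^{n+1})$. First excise a ball $B_\delta$ with a cutoff $\theta_\delta$ satisfying $|\nabla\theta_\delta|\le C/\delta$. Since $|X|=1$,
\[
\Bigl|\int X\cdot\nabla\theta_\delta\,\zeta\,dx\Bigr|\le C\delta^{n}\to0,
\]
and $n\ge 2$, so the origin is removable. It therefore suffices to show $\int X\cdot\nabla\zeta=0$ for $\zeta$ supported away from $0$.

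Near $\C\setminus\{0\}$, I would integrate over the regions $\{\lambda_\pm>s\}\cap U_\pm$, i.e. the part of each side beyond the leaf $S_{\pm s}$, which exhaust $U_\pm$ as $s\downarrow0$. On such a region $X$ is smooth and divergence free, so by the divergence theorem
\[
\int_{\{\lambda_\pm>s\}} X\cdot\nabla\zeta\,dx=\mp\int_{S_{\pm s}}\zeta\,X\cdot\nu_{\pm s}\,d\cH^n=\mp\int_{S_{\pm s}}\zeta\,d\cH^n .
\]
As $s\downarrow0$, the leaves $S_{\pm s}=s\Sigma_\pm$ converge to $\C$ locally smoothly on compact sets away from $0$, which follows from the $C^2$ asymptotics \eqref{eq:power-asymptotic} and \eqref{eq:log-asymptotic} after rescaling. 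Hence both boundary integrals tend to $\int_\C\zeta$. The outward normals of the two regions point toward $\C$ from opposite sides, so the two contributions carry opposite signs and cancel in the limit. Dominated convergence, using $|X|\le1$, then gives
\[
\int X\cdot\nabla\zeta\,dx=0 .
\]

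The delicate points are the orientation and sign bookkeeping and the uniform smooth convergence of $s\Sigma_\pm\cap\spt\zeta$ to $\C\cap\spt\zeta$. Alternatively, I would argue directly: $X$ is continuous across $\C$ and divergence free on each side, and the normal trace of $X$ on $\C$ from either side equals $X\cdot\nu_E=1$. The jump in normal trace is therefore zero, and the flux identity holds across the Lipschitz interface $\C\setminus\{0\}$.
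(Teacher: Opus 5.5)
Your proof is correct and follows the same route as the paper: $\diver X=0$ classically on $U_\pm$ by minimality of the leaves, continuity across $\C\setminus\{0\}$ from \eqref{eq:leaf-normal-trace}, the divergence theorem on exhaustions of the two sides with boundary terms that cancel in the limit, and a cutoff at the origin costing $O(\delta^n)$. The only refinement is that you exhaust by $\{\lambda_\pm>s\}$, whose boundaries are the leaves themselves. This makes the flux exactly $\mp\int_{S_{\pm s}}\zeta\,d\cH^n$, so the paper's brief ``pass to $\C$'' step reduces to the local smooth convergence $s\Sigma_\pm\to\C$ away from the origin.
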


\begin{lemma}\label{lem:ambient-Y}
For each sign $\pm$, there is a smooth vector field $Y_\pm$ on $U_\pm$
such that
\begin{align}
    \diver Y_\pm&=1&&\text{in }U_\pm,\label{eq:Y-div}\\
    Y_\pm\cdot X&=0&&\text{in }U_\pm,\label{eq:Y-tangent}\\
    |Y_\pm(x)|&\leq C_{\C}|x|&&\text{for }x\in U_\pm.
      \label{eq:Y-bound}
\end{align}
\end{lemma}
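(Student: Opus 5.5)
We build $Y_\pm$ as degree-one homogeneous extensions $Y(tz)=tZ(z)$ of a tangent field $Z\in C^\infty(\Sigma_\pm;T\Sigma_\pm)$. By \eqref{eq:scaled-field-divergence}, $\diver Y(tz)=\psi_\pm(z)^{-1}\diver_{\Sigma_\pm}(\psi_\pm Z)(z)$. Hence $\diver Y_\pm=1$ on $U_\pm$ reduces to a single equation on the leaf:
\[
    \diver_{\Sigma_\pm}(\psi_\pm Z)=\psi_\pm\qquad\text{on }\Sigma_\pm .
\]
Tangency \eqref{eq:Y-tangent} is automatic, since $Z(z)\in T_z\Sigma_\pm$ and, by \eqref{eq:scaled-leaf-geometry}, $T_{tz}S_{\pm t}=T_z\Sigma_\pm$ and $X(tz)=\nu_\pm(z)$. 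Homogeneity gives $|Y(tz)|=t|Z(z)|$ and $|tz|=t|z|$. So \eqref{eq:Y-bound} is equivalent to the leaf bound $|Z(z)|\le C|z|$ on $\Sigma_\pm$. Since $\dist(0,\Sigma_\pm)=1$, this is just a growth bound at infinity together with boundedness on compact sets.

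To solve the leaf equation we separate a compact core from the graphical end. On the end $r>R_0$ we use the parametrization $G_\pm$ and look for $Z=v\,\partial_r$, where $\partial_r=\partial_rG_\pm$ is tangent to the leaf. By \eqref{eq:end-radial-divergence} the equation becomes the ODE in $r$
\[
    \partial_r\bigl(J_\pm\psi_\pm v\bigr)=J_\pm\psi_\pm ,
\]
with $\psi_\pm$ evaluated at $G_\pm(r,\omega)$. We take the solution
\[
    v(r,\omega)=\frac{1}{J_\pm\psi_\pm(r,\omega)}\int_{R_0}^{r}J_\pm\psi_\pm(s,\omega)\,ds+\frac{k(\omega)}{J_\pm\psi_\pm(r,\omega)} ,
\]
where $k$ is fixed later by the matching with the core.

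The main obstacle is the bound $|v|\le Cr$. It requires a doubling-type control of the positive weight $w:=J_\pm\psi_\pm$, namely
\[
    \int_{R_0}^r w(s,\omega)\,ds\le Cr\,w(r,\omega)\qquad\text{uniformly in }\omega .
\]
By \eqref{eq:J-two-sided} and \eqref{eq:HS-psi-comparison}, $w\simeq r^{n-1}|h_\pm|$. The asymptotics \eqref{eq:power-asymptotic} or \eqref{eq:log-asymptotic} give $|h_\pm|\simeq r^{\beta}$ or $|h_\pm|\simeq r^{-(n-2)/2}|a+b\log r|$, using $w_1>0$ on the connected link. In the power case $w\simeq r^{n-1+\beta}$ with $n-1+\beta\ge n-1+\gamma_-\ge 1>-1$ by \eqref{eq:gamma-lower}, so the integral is $\simeq r\,w$. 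In the log case the exponent is $n/2>0$, and the factor $|a+b\log r|$, which is positive for $r$ large after enlarging $R_0$, changes by at most a bounded factor. The term $k/w$ decays, since $w\to\infty$ in both cases. Thus $|v|\le Cr$, and since $|\partial_rG_\pm|\le C$ we get $|Z|\le C|z|$ on the end.

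On the core, a compact piece of $\Sigma_\pm$, we need any smooth $Z$ solving the equation that glues smoothly to the end solution. Pick a smooth cutoff $\zeta$ of $r$ equal to $1$ for $r>2R_0$ and $0$ for $r<R_0$, where $r$ is a smooth extension of the end coordinate. Set $Z_1=\zeta v\,\partial_r$ with $k=0$. The error
\[
    f=\psi_\pm-\psi_\pm^{-1}\diver_{\Sigma_\pm}(\psi_\pm Z_1)
\]
is smooth and supported in a compact set $K\subset\Sigma_\pm$. Since $\Sigma_\pm$ is connected and noncompact, the divergence operator $W\mapsto\diver_{\Sigma_\pm}W$ on compactly supported tangent fields has no obstruction once support is allowed to leak to infinity. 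Concretely, we push the mass of $\psi_\pm f$ out along a tube into the end by radial transport. Equivalently, we solve $\diver_{\Sigma_\pm}W=\psi_\pm f$ with $W$ supported in $K$ union the end, where on the end $W=\tilde k(\omega)\,\partial_r/J_\pm$ with $\tilde k$ smooth on $S$ and $\int_S\tilde k=\int\psi_\pm f$. Such $W$ exists by solving a Neumann-type problem on a compact domain containing $K$, bounded by the level set $\{r=2R_0\}$, with prescribed outward flux density $\tilde k$; this is compatible by the divergence theorem. Then $Z=Z_1+\psi_\pm^{-1}W$ solves the leaf equation. On the end it still has the form above with $k=\tilde k$, so the growth bound persists.

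Smoothness of $Y_\pm$ on $U_\pm$ follows because $\Phi_\pm$ is a diffeomorphism.
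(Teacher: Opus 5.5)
The steps before the core correction follow the paper's route (Lemma~\ref{lem:leaf-Z} plus the lift $Y_\pm(tz)=tZ_\pm(z)$). This covers your reduction to $\diver_{\Sigma_\pm}(\psi_\pm Z)=\psi_\pm$, the radial ODE on the graphical end, and the bound $|v|\le Cr$ obtained from $w\simeq r^{n-1}|h_\pm|$ and $\gamma_-\geq-(n-2)$. Two small fixes are needed there. In the logarithmic case, the fact you actually use is $|a+b\log s|\le|a+b\log r|$ for $R_0\le s\le r$ with $R_0$ large, not a bounded change. The error should be $f=1-\psi_\pm^{-1}\diver_{\Sigma_\pm}(\psi_\pm Z_1)$, so that $\psi_\pm f=\psi_\pm-\diver_{\Sigma_\pm}(\psi_\pm Z_1)$.

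The genuine gap is the core correction. Suppose you solve a Neumann problem $\Delta u=\psi_\pm f$ on $\{r\le 2R_0\}$ and set $W=\nabla u$ there and $W=\tilde k\,\partial_r/J_\pm$ for $r>2R_0$. This matches only the normal components on $\Gamma_{2R_0}$. The tangential part $\nabla^{\Gamma}u$ is whatever the Neumann problem produces, and it has no reason to agree with the tangential part of $\tilde k\,\partial_r/J_\pm$; higher derivatives do not match either. So $W$, and hence $Z$ and $Y_\pm$, jumps across $\Gamma_{2R_0}$ (respectively across $\bigcup_{t>0}t\Gamma_{2R_0}\subset U_\pm$). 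The divergence equation still holds in $\mathcal D'$, but the field is not smooth, not even continuous. The lemma asserts smoothness, and Lemma~\ref{lem:volume-flux} applies the Gauss--Green formula to this field on arbitrary finite-perimeter sets. Moving the interface does not help, because a Neumann gradient never matches a prescribed field tangentially.

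What is missing is a compactly supported smooth solution of $\diver_\Sigma W=f$ for compactly supported zero-mean $f$. The paper gets this by putting the flux into the end solution before cutting off. It takes the constant $a$ of \eqref{eq:flux-compatibility} (your $k$, chosen constant) and cuts off to $\widetilde Z$. The divergence theorem on $K_R$ together with \eqref{eq:end-radial-flux} then shows that $f=\psi-\diver_\Sigma(\psi\widetilde Z)\in C_c^\infty(\Sigma)$ satisfies $\int_\Sigma f=0$. Since $\Sigma$ is connected and oriented, $H^n_c(\Sigma)\cong\R$ gives $\alpha\in\Omega_c^{n-1}(\Sigma)$ with $d\alpha=f\,dV_\Sigma$. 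The field $W$ defined by $\iota_WdV_\Sigma=\alpha$ is smooth and compactly supported, and Cartan's formula gives $\diver_\Sigma W=f$.

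Your version can be repaired the same way. Cut off $\tilde k\,\partial_r/J_\pm$ smoothly in $2R_0<r<3R_0$, and note that the resulting compactly supported error has zero integral by your flux choice. Then solve it with a compactly supported field, using de Rham cohomology as above or a Bogovskii-type operator on a connected compact domain, instead of a Neumann gradient.
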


\begin{lemma}[{cf. \cite[Proposition 4.1]{de2014sharp}}]\label{lem:calibration-identity}
Let $F$ be a set of locally finite perimeter with
$F\mathbin\triangle E\Subset B_1$.  Then
\begin{align}
    D_1(F)
    &=\int_{\partial^*F\cap B_1}
      (1-X\cdot\nu_F)\,d\cH^n\label{eq:calibration-identity}\\
    &=\frac12\int_{\partial^*F\cap B_1}
      |\nu_F-X|^2\,d\cH^n.\label{eq:angle-identity}
\end{align}
\end{lemma}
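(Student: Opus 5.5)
The proof rests on the divergence theorem applied to $X$ on $F$ and on $E$, after mollifying $X$, followed by an elementary algebraic identity for unit vectors. By Lemma~\ref{lem:calibration}, $X$ is continuous with values in $\mathbb S^n$ away from the origin and satisfies $\diver X=0$ in $\mathcal D'(\R^{n+1})$. Since $|X|=1$, we have $X\in L^\infty$.

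Fix $F$ with $K:=\spt(\chi_F-\chi_E)\Subset B_1$, and choose $\rho<1$ with $K\subset B_\rho$. Let $X_\delta=X*\varrho_\delta$ be a standard mollification. Then $X_\delta$ is smooth, $|X_\delta|\le1$, and $\diver X_\delta=(\diver X)*\varrho_\delta=0$ everywhere. Moreover $X_\delta\to X$ locally uniformly on $\R^{n+1}\setminus\{0\}$ and pointwise boundedly away from the origin. Take a cutoff $\zeta\in C_c^\infty(B_1)$ with $\zeta=1$ on $B_\rho$. The Gauss--Green formula for sets of locally finite perimeter, applied to the field $\zeta X_\delta$, gives
\[
\int_{\partial^*F}\zeta X_\delta\cdot\nu_F\,d\cH^n-\int_{\partial^*E}\zeta X_\delta\cdot\nu_E\,d\cH^n=\int(\chi_F-\chi_E)\,\diver(\zeta X_\delta)\,dx .
\]
The right-hand side equals $\int(\chi_F-\chi_E)(\zeta\diver X_\delta+X_\delta\cdot\nabla\zeta)\,dx=0$. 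Indeed, $\diver X_\delta=0$, and $\chi_F-\chi_E$ vanishes on $\spt\nabla\zeta\subset B_1\setminus B_\rho$.

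We then let $\delta\to0$. The point $\{0\}$ is $\cH^n$-null, and $\partial^*F$ has locally finite $\cH^n$ measure, so dominated convergence yields the same identity with $X$ in place of $X_\delta$. Outside $B_\rho$ the reduced boundaries of $F$ and $E$ coincide with the same normals, so the contributions there cancel. On $\C\setminus\{0\}$ we have $X=\nu_E$, hence
\[
\int_{\partial^*F\cap B_1}X\cdot\nu_F\,d\cH^n=\int_{\partial^*E\cap B_1}1\,d\cH^n=\Per(E;B_1).
\]
Therefore
\[
D_1(F)=\Per(F;B_1)-\int_{\partial^*F\cap B_1}X\cdot\nu_F\,d\cH^n,
\]
which is \eqref{eq:calibration-identity}. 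Finally, $\frac12|\nu_F-X|^2=1-X\cdot\nu_F$ because both vectors are unit vectors, and this gives \eqref{eq:angle-identity}.

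The main obstacle is the limit $\delta\to0$, together with the meaning of $X$ on $\partial^*F\cap\C$. The former is handled by bounded pointwise convergence away from the origin, which is exactly what the continuity in Lemma~\ref{lem:calibration} supplies. For the latter, $X$ is defined everywhere on $\R^{n+1}\setminus\{0\}$ and is continuous, so no trace issue arises. The cancellation on $\partial^*F\cap\partial^*E$ outside $B_\rho$ follows from the locality of reduced boundaries, since $F$ and $E$ agree on the open set $\R^{n+1}\setminus K$.
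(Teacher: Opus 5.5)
Your proof is correct and follows essentially the same argument as the paper. You mollify $X$, use $\diver X_\delta=0$ in Gauss--Green against $\chi_F-\chi_E$, and pass to the limit by dominated convergence using the continuity of $X$ away from the origin. You then conclude with $X=\nu_E$ on $\C$ and the unit-vector identity. The only difference is cosmetic: your cutoff $\zeta$ makes explicit what the paper handles by integrating $X_\delta$ directly against the compactly supported measure $D\chi_F-D\chi_E$.
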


We first prove Lemma~\ref{lem:calibration}.

\begin{proof}[Proof of Lemma~\ref{lem:calibration}]
Clearly \eqref{eq:scaled-leaf-geometry} shows that $X$ is smooth on each side $U_\pm$. 
At $x\in S_\lambda$, $\lambda\ne0$, let
$e_1,\ldots,e_n$ be an orthonormal basis of $T_xS_\lambda$. We have
\begin{align*}
    \diver X(x)
    &=\sum_{i=1}^n\langle D_{e_i}X,e_i\rangle
      +\langle D_XX,X\rangle\\
    &=H_{S_\lambda}(x)=0.
\end{align*}
Thus
$\diver X=0$ classically in
$U_+\cup U_-$. And by \eqref{eq:leaf-normal-trace}, $X$ extends continuously across
$\C\setminus\{0\}$, thus \eqref{eq:X-trace} holds.

Next we show \eqref{eq:X-div}.
First consider $\varphi\in C_c^\infty(\R^{n+1}\setminus\{0\})$.  The outer normals of
$U_-=E$ and $U_+=\R^{n+1}\setminus\overline E$ on $\C$ are $\nu_E$ and
$-\nu_E$, respectively. Apply the divergence theorem first on smooth
inner exhaustions of the two sides and then pass to $\C$ using
the continuity of $X$.  This gives
\begin{align*}
\int_{\R^{n+1}}X\cdot\nabla\varphi\,dx
    &=\int_{\C}\varphi
      \left(
        X|_{\C}\cdot\nu_E-X|_{\C}\cdot\nu_E
      \right)d\cH^n=0.
\end{align*}
Thus $\diver X=0$ in
$\mathcal D'(\R^{n+1}\setminus\{0\})$.

Finally, let
$\varphi\in C_c^\infty(\R^{n+1})$ now be arbitrary, and let $\eta_\rho$ be a
smooth cutoff which is zero on $B_\rho$, one outside $B_{2\rho}$, and
satisfies $|\nabla\eta_\rho|\leq C/\rho$. By a standard capacity argument we get \eqref{eq:X-div}.
\end{proof}

We next prove the exact calibration identity. 

\begin{proof}[Proof of Lemma~\ref{lem:calibration-identity}]
Let $\rho_\varepsilon$ be a standard nonnegative mollifier and define
$X_\varepsilon=X*\rho_\varepsilon$.  By
Lemma~\ref{lem:calibration},
$\diver X_\varepsilon=0$.  Since $F\mathbin\triangle E\Subset B_1$, the vector-valued Radon measure
$D\chi_F-D\chi_E$ has compact support in $B_1$, and hence
$$
    \int_{\R^{n+1}}X_\varepsilon\cdot
    d(D\chi_F-D\chi_E)
    =-\int_{\R^{n+1}}(\chi_F-\chi_E)
      \diver X_\varepsilon\,dx
    =0.
$$
We now let $\varepsilon\downarrow0$.  Since $X$ is continuous on $\R^{n+1}\setminus\{0\}$, $X_\varepsilon(x)\to X(x)$ for every $x\ne0.$ In addition, $|X_\varepsilon|\leq1$, so by the dominated convergence theorem, 
$$
    \int_{\partial^*F\cap B_1}X\cdot\nu_F\,d\cH^n
    =\int_{\partial^*E\cap B_1}X\cdot\nu_E\,d\cH^n=\Per(E;B_1).
$$

Thus, 
$$ D_1(F)
    =\int_{\partial^*F\cap B_1}
      (1-X\cdot\nu_F)\,d\cH^n. $$
\end{proof}

\begin{lemma}\label{lem:leaf-Z}
For each sign $\pm$ there is a smooth vector field $Z_\pm$ tangent to
$\Sigma_\pm$ such that
\begin{equation}\label{eq:weighted-div}
    \diver_{\Sigma_\pm}(\psi_\pm Z_\pm)=\psi_\pm
    \qquad\text{on }\Sigma_\pm,
\end{equation}
and
\begin{equation}\label{eq:Z-growth}
    |Z_\pm(z)|\leq C_{\C}|z|
    \qquad\text{for every }z\in\Sigma_\pm.
\end{equation}
\end{lemma}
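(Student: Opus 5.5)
We need to construct a tangent vector field $Z$ on $\Sigma$ with $\diver_\Sigma(\psi Z)=\psi$ and $|Z(z)|\le C|z|$. The plan is to build $Z$ as a radial field on the graphical end and transfer it to the compact part by a flux-corrected gradient field.

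\textbf{Step 1: the end.} On the graphical end $r>R_0$ of $\Sigma_\pm$ we use the parametrization $G_\pm$ and the radial coordinate vector $\partial_r=\partial_rG_\pm$. Since $\partial_r=\omega+\partial_rh\,\nu_E(\omega)$ and $\partial_rh=O(r^{\beta-1})$, we have $|\partial_r|\le 2$ for large $r$. We look for $Z=f\,\partial_r$ with $f=f(r,\omega)$. By \eqref{eq:end-radial-divergence},
\[
\diver_\Sigma(\psi f\partial_r)=\frac{1}{J}\partial_r(J\psi f),
\]
so the equation reduces, for each fixed $\omega$, to the ODE
\[
\partial_r(J\psi f)=J\psi.
\]
We solve it by
\[
f(r,\omega)=\frac{1}{J\psi(r,\omega)}\int_{R_1}^r J\psi(s,\omega)\,ds
\]
for some fixed $R_1>R_0$. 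This solution is smooth in $(r,\omega)$. The main obstacle is the bound $|f|\le Cr$. By \eqref{eq:J-two-sided} and \eqref{eq:HS-psi-comparison}, $J\psi\approx r^{n-1}|h|$. By the asymptotics \eqref{eq:power-asymptotic} and \eqref{eq:log-asymptotic}, with $w_1>0$ on the connected link $S$ (where it can be chosen positive), $|h|$ is comparable to $r^{\beta}$, or to $r^{-(n-2)/2}|a+b\log r|$ in the log case, uniformly in $\omega$. Since $\beta\ge\gamma_-\ge-(n-2)$ by \eqref{eq:gamma-lower}, the integrand behaves like $s^{n-1+\beta}$ with exponent $n-1+\beta\ge 1>-1$. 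The integral is therefore $\lesssim r^{n+\beta}$, and dividing gives $f\lesssim r$. The log case is handled the same way, since it has exponent $n/2\ge 1$ together with a log factor that is comparable on dyadic scales. A two-sided, uniform-in-$\omega$ comparability of $\psi$ with a radial profile is exactly what is needed here, and I expect this to be the delicate point. If uniformity fails, I would instead use the monotonicity of $J\psi$ in $r$ up to constants.

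\textbf{Step 2: the compact part.} Take a cutoff $\zeta$ on $\Sigma$ equal to $1$ for $r\ge R_1+2$ and $0$ for $r\le R_1+1$, and set $Z_0=\zeta f\partial_r$. Then $g:=\psi-\diver_\Sigma(\psi Z_0)$ is smooth and supported in the compact set $K=\Sigma\cap\{r\le R_1+2\}$. We must solve $\diver_\Sigma(\psi W)=g$ with $W$ compactly supported, or at least bounded. Since $\Sigma$ is connected and noncompact, we can choose a compactly supported $W$ with prescribed divergence once the total integral vanishes. For example, we can push the flux $\int g$ out to the end by adding $c\,\zeta' \cdot(\text{radial})$-type corrections, i.e. by adjusting the lower limit in $f$. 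Concretely, replace $f$ by the solution whose constant absorbs $\int_K g$, so that $\int_\Sigma g\,d\cH^n=0$. Then solve $\Delta_\Sigma v=g$ on a compact smooth domain $K'\supset K$ with Neumann conditions (solvable since the mean is zero), and put $\psi W=\nabla v$ multiplied by a cutoff. The cutoff is not needed if we take $v$ compactly supported: this is possible via a standard Bogovskii-type construction on the connected manifold $K'$. Since $\psi>0$ is smooth on $K'$, $W$ is smooth and bounded.

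\textbf{Step 3: conclusion.} Set $Z=Z_0+W$. Then \eqref{eq:weighted-div} holds everywhere on $\Sigma$, and $|Z|\le C(1+|z|)\le C|z|$, because $|z|\ge\dist(0,\Sigma)=1$. This gives \eqref{eq:Z-growth}.
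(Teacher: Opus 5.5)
Your Step 1 and the flux bookkeeping in Step 2 follow the paper's argument. The paper also solves $\partial_r(J\widehat\psi\beta)=J\widehat\psi$ on the end. It adds a constant $a=\cH^{n-1}(S)^{-1}\int_{K_0}\psi\,d\cH^n$, chosen exactly so that the compactly supported error $\psi-\diver_\Sigma(\psi\widetilde Z)$ has zero integral. It then gets $|\beta|\le Cr$ from $\widehat\psi J\approx r^{n-1+\gamma}$ with $n+\gamma\ge2$, or from $\widehat\psi J\approx r^{n/2}(1+|b|\log r)$ in the non-strictly-stable case, just as you do.

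The gap is in how you remove the zero-mean error $g$. The Neumann route does not give a smooth solution. If $\Delta_\Sigma v=g$ on $K'$ with $\partial_\nu v=0$, and $\chi$ is a cutoff supported in $K'$ with $\chi=1$ near $\spt g$, then $\diver_\Sigma(\chi\nabla v)=g+\nabla\chi\cdot\nabla v$. The error term $\nabla\chi\cdot\nabla v$ does not vanish in general: near $\partial K'$, $v$ is only harmonic with zero normal derivative, not constant. Extending $\nabla v$ by zero instead gives the correct distributional divergence, but the field is discontinuous across $\partial K'$, so it is not smooth. Nor can you take $v$ compactly supported. If $\Delta_\Sigma v=g$ with $\spt v\Subset K'$, then $\int_{K'}g\,h\,d\cH^n=0$ for every harmonic $h$ on $K'$, which is much stronger than $\int g=0$. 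Bogovskii's construction does not produce such a potential; it produces a compactly supported vector field.

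The repair is to use the fact you state correctly yourself: on a connected manifold, a smooth compactly supported function with zero integral is the divergence of a smooth compactly supported vector field. One way to prove it is to split $g$ along a chain of overlapping coordinate balls into smooth zero-mean pieces, each supported in a single ball; connectedness lets you transfer mass along the chain. Then apply Bogovskii's operator chart by chart to the coordinate form of the divergence. Summing gives $\widetilde W\in C_c^\infty(T\Sigma)$ with $\diver_\Sigma\widetilde W=g$, and you set $W=\widetilde W/\psi$.

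The paper does this step more economically with compactly supported de Rham cohomology. Since $\Sigma$ is connected and oriented, integration gives $H^n_c(\Sigma)\cong\R$, so $g\,dV_\Sigma=d\alpha$ for some $\alpha\in\Omega^{n-1}_c(\Sigma)$. The field $\widetilde W$ defined by $\iota_{\widetilde W}dV_\Sigma=\alpha$ then satisfies $(\diver_\Sigma\widetilde W)\,dV_\Sigma=\mathcal L_{\widetilde W}dV_\Sigma=d\alpha$ by Cartan's formula. With either repair, your Step 3 goes through unchanged.
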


\begin{proof}
Fix one side and omit the subscript.
We enlarge $R_1>R_0$, if
necessary, so that the asymptotic estimates are valid for $r\geq R_1$.  For
$R\geq R_1$, use $\Gamma_R$ and $\eta_R$ from \eqref{eq:end-slice-definition}, and set
$$
    K_0=\Sigma\setminus G((R_1,\infty)\times S).
$$
When $R>R_1$, set
$$
    K_R=K_0\cup G([R_1,R]\times S).
$$
Thus $K_R$ is a compact domain with boundary $\Gamma_R$, and $\eta_R$ is
its outward unit conormal.
Choose the constant number
\begin{equation}\label{eq:flux-compatibility}
     a=\frac{1}{\cH^{n-1}(S)}
      \int_{K_0}\psi\,d\cH^n.
\end{equation}

On the graphical region, write
$\widehat\psi=\psi\circ G$ and use the radial tangent field $\partial_r$ defined
in \eqref{eq:partial_r}.  For $r\geq R_1$, define
\begin{equation}\label{eq:beta-def}
    \beta(r,\omega)
    =\frac{
        a+\displaystyle\int_{R_1}^r
          \widehat\psi(s,\omega)J(s,\omega)\,ds
    }{\widehat\psi(r,\omega)J(r,\omega)},
    \qquad
    Z_0=\beta\,\partial_r.
\end{equation}
The denominator is positive because $\widehat\psi>0$ by \eqref{eq:psi-def} and
$J>0$.  Moreover,
\eqref{eq:beta-def} is equivalent to
$$
    \widehat\psi(r,\omega)J(r,\omega)\beta(r,\omega)
    =a+\int_{R_1}^r
      \widehat\psi(s,\omega)J(s,\omega)\,ds,
$$
so the fundamental theorem of calculus gives
$$
    \partial_r(\widehat\psi J\beta)(r,\omega)
    =\widehat\psi(r,\omega)J(r,\omega).
$$
Taking $u=\widehat\psi\beta$ in \eqref{eq:end-radial-divergence}, we conclude
that
\begin{equation}\label{eq:end-div-computation}
    \bigl(\diver_\Sigma(\psi Z_0)\bigr)(G(r,\omega))
    =\frac{1}{J}\partial_r(J\widehat\psi\beta)(r,\omega)
    =\widehat\psi(r,\omega).
\end{equation}

Next, we verify the growth estimate for $\beta$.  Suppose first that
$\C$ is strictly stable. Let
$\gamma\in\{\gamma_-,\gamma_+\}$ denote the exponent in \eqref{eq:power-asymptotic}.  Combining \eqref{eq:power-asymptotic}, \eqref{eq:J-two-sided} and
\eqref{eq:HS-psi-comparison} gives
$$
    c r^{n-1+\gamma}
    \leq \widehat\psi(r,\omega)J(r,\omega)
    \leq C r^{n-1+\gamma}.
$$
By \eqref{eq:gamma-lower},
$\gamma\geq\gamma_-\geq-(n-2)$, and hence $n+\gamma\geq2$.  Thus
$$
    \int_{R_1}^r s^{n-1+\gamma}\,ds
    \leq C r^{n+\gamma}.
$$
It follows from \eqref{eq:beta-def} that
\begin{align*}
    |\beta(r,\omega)|
    &\leq
      \frac{
       a
        +C\displaystyle\int_{R_1}^r s^{n-1+\gamma}\,ds
      }{c r^{n-1+\gamma}}\\
    &\leq Cr.
\end{align*}

Suppose now that $\C$ is stable but not strictly stable.  Then by \eqref{eq:log-asymptotic}, \eqref{eq:J-two-sided}, and \eqref{eq:HS-psi-comparison}
$$
\begin{aligned}
    c r^{n/2}\bigl(1+|b|\log r\bigr)
    &\leq \widehat\psi(r,\omega)J(r,\omega)\\
    &\leq
    C r^{n/2}\bigl(1+|b|\log r\bigr).
\end{aligned}
$$
For $r\geq R_1>1$,
$$
    \int_{R_1}^r
      s^{n/2}\bigl(1+|b|\log s\bigr)\,ds
    \leq
    C r^{n/2+1}\bigl(1+|b|\log r\bigr).
$$
Hence \eqref{eq:beta-def} again yields
\begin{align*}
    |\beta(r,\omega)|
    &\leq
    \frac{
      a +C r^{n/2+1}(1+|b|\log r)
    }{
      c r^{n/2}(1+|b|\log r)
    }\\
    &\leq Cr.
\end{align*}
By the definition of $G$ in \eqref{eq:end-parametrization} and the decay of $h$, after increasing $R_1$ if necessary,
$|\partial_rG|\leq C$ and $|G(r,\omega)|\geq cr$.  Hence
\begin{equation}\label{eq:Z0-growth}
    |Z_0(G(r,\omega))|
    =|\beta(r,\omega)|\,|\partial_rG(r,\omega)|
    \leq Cr
    \leq C|G(r,\omega)|.
\end{equation}

Finally, we modify $Z_0$ on the compact part of $\Sigma$.  Choose $R_2>R_1$ and use a cutoff function to extend $Z_0$ to a smooth tangent vector field $\widetilde Z$ on $\Sigma$ such that $\widetilde Z=Z_0$ for $r\geq R_2$. Set
$$
    f=\psi-\diver_\Sigma(\psi\widetilde Z).
$$
Equation \eqref{eq:end-div-computation} shows that
$f\in C_c^\infty(\Sigma)$.  Take $R>R_2$ sufficiently large that
$\spt f\subset K_R$.  

Denote
\begin{equation}\label{eq:end-slice-definition}
     \Gamma_{\pm,R}=G_\pm(\{R\}\times S)
\end{equation}
and let $\eta_{\pm,R}$ be the unit conormal pointing toward increasing
$r$. By the definition of $J_\pm$ and the induced area element on
$\Gamma_{\pm,R}$,
\begin{equation}\label{eq:end-radial-flux}
    \int_{\Gamma_{\pm,R}}
    u\partial_r\cdot\eta_{\pm,R}\,d\cH^{n-1}
    =
    \int_S u(R,\omega)J_\pm(R,\omega)
      \,d\cH^{n-1}_S(\omega).
\end{equation}
Take \eqref{eq:end-radial-flux} with
$u=\widehat\psi\beta$,
\begin{equation}\label{eq:coordinate-flux}
    \int_{\Gamma_R}\psi Z_0\cdot\eta_R\,d\cH^{n-1}
    =\int_S \widehat\psi(R,\omega)J(R,\omega)\beta(R,\omega)
      \,d\cH^{n-1}.
\end{equation}
Since $\widetilde Z=Z_0$ on $\Gamma_R$, by the
divergence theorem,
\eqref{eq:beta-def}, and \eqref{eq:coordinate-flux}, we get
\begin{align*}
    \int_\Sigma f\,d\cH^n
    &=\int_{K_R}f\,d\cH^n\\
    &=\int_{K_R}\psi\,d\cH^n
      -\int_{\Gamma_R}\psi Z_0\cdot\eta_R\,d\cH^{n-1}\\
    &=\int_{K_0}\psi\,d\cH^n
      +\int_{R_1}^R\int_S
        \widehat\psi(s,\omega)J(s,\omega)
        \,d\cH^{n-1}\,ds\\
    &\quad-\int_S
      \left(a+\int_{R_1}^R        \widehat\psi(s,\omega)J(s,\omega)\,ds\right)
      d\cH^{n-1}\\
    &=0
\end{align*}
by \eqref{eq:flux-compatibility}.

As noted after \eqref{eq:foliation-map}, $\Sigma$ is connected and
oriented.  Let $dV_\Sigma=d\cH^n_\Sigma$ be its Riemannian volume
form.  Integration induces an isomorphism
\[
    H_c^n(\Sigma)\longrightarrow\R,
    \qquad
    [\omega]\longmapsto\int_\Sigma\omega;
\]
see \cite[Remark~5.7 and Corollary~5.8]{bott1982differential}.
The compactly supported top-degree form
\[
    \omega=f\,dV_\Sigma
\]
has zero integral. Hence there exists
$\alpha\in\Omega_c^{n-1}(\Sigma)$ such that
\[
    d\alpha=f\,dV_\Sigma.
\]

Hence there is a unique $W\in C_c^\infty(T\Sigma)$ satisfying
\[
    \iota_WdV_\Sigma=\alpha.
\]
Then by Cartan's formula,
\[
\begin{aligned}
    (\diver_\Sigma W)\,dV_\Sigma
    &=
    \mathcal L_WdV_\Sigma\\
    &=
    d(\iota_WdV_\Sigma)
      +\iota_W(d\,dV_\Sigma)\\
    &=
    d\alpha\\
    &=
    f\,dV_\Sigma.
\end{aligned}
\]
Therefore
\[
    \diver_\Sigma W=f.
\]
Since $\psi>0$ on $\Sigma$, the tangent field
\[
    V=\frac{W}{\psi}
\]
is smooth and compactly supported.  Define
\[
    Z=\widetilde Z+V.
\]
Then
\[
\begin{aligned}
    \diver_\Sigma(\psi Z)
    &=
    \diver_\Sigma(\psi\widetilde Z)
      +\diver_\Sigma W\\
    &=
    \diver_\Sigma(\psi\widetilde Z)+f\\
    &=\psi.
\end{aligned}
\]
Thus \eqref{eq:weighted-div} holds.

Outside a compact subset of $\Sigma$, we have $W=0$ and
$\widetilde Z=Z_0$.  Hence $Z=Z_0$ there, and
\eqref{eq:Z0-growth} gives
\[
    |Z(z)|\leq C|z|.
\]
On the remaining compact subset, $Z$ is bounded and $|z|$ has a
positive lower bound because $0\notin\Sigma$.  After increasing the
constant, the same estimate holds on all of $\Sigma$.  This proves
\eqref{eq:Z-growth}.
\end{proof}
Finally, we define $Y_\pm$ by scaling $Z_\pm$ along the foliation leaves.

\begin{proof}[Proof of Lemma~\ref{lem:ambient-Y}]
For each $(t,z)\in (0,\infty)\times \Sigma_\pm$, we define
\begin{equation}\label{eq:Y-def}
    Y_\pm(tz)=tZ_\pm(z).
\end{equation}
This is smooth because $\Phi_\pm$ is a diffeomorphism. By \eqref{eq:scaled-field-divergence} with $Z=Z_\pm$ and \eqref{eq:weighted-div}, we have
$$
    (\diver Y_\pm)(tz)
    =\frac{1}{\psi_\pm(z)}
      \diver_{\Sigma_\pm}(\psi_\pm Z_\pm)(z)
    =1,
$$
which proves \eqref{eq:Y-div}.  

Finally, \eqref{eq:Z-growth} gives
$$
    |Y_\pm(tz)|
    =t|Z_\pm(z)|
    \leq Ct|z|
    =C|tz|,
$$
which proves \eqref{eq:Y-bound}.
\end{proof}

\subsection*{The volume flux identity}
\label{subsec:bv-proofs}

In this subsection, we will prove the volume-flux formula used in
the proof of Theorem \ref{thm:main}.

\begin{lemma}\label{lem:volume-flux}
Let $U$ be either $U_+$ or $U_-$, and let $Y$ be the corresponding field
from Lemma~\ref{lem:ambient-Y}.  If $A\subset U$ has finite
perimeter and $\spt\chi_A\Subset B_1$, then
\begin{equation}\label{eq:volume-flux}
    \cL^{n+1}(A)
    =\int_{\partial^*A\cap U}Y\cdot\nu_A\,d\cH^n.
\end{equation}
\end{lemma}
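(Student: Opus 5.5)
The plan is to obtain \eqref{eq:volume-flux} from the distributional Gauss--Green formula for $\chi_A$ tested against a suitable truncation of $Y$. The field $Y$ is smooth only in $U$, and it may degenerate as we approach $\C$. The first observation is that the support condition $\spt\chi_A\Subset B_1$ alone does not keep $A$ away from $\C$. A set $A\subset U$ can touch $\C$, and then part of $\partial^*A$ lies on $\C$. Note that the formula integrates only over $\partial^*A\cap U$. So I must show that the cone contributes nothing, and this is exactly where the tangency $Y\cdot X=0$ from \eqref{eq:Y-tangent} enters. On $\C\setminus\{0\}$ the normal of $A$ is $\pm\nu_E=\pm X$, and $Y$ should be asymptotically orthogonal to it.

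Step 1 is a cutoff construction. Let $\lambda=\lambda_\pm$ be the foliation parameter from \eqref{eq:lambda-def}. For $\delta>0$, choose a cutoff $\zeta_\delta=\theta(\lambda/\delta)$, where $\theta$ is smooth, vanishes on $[0,1]$ and equals $1$ on $[2,\infty)$. The product $\zeta_\delta Y$ is smooth with compact support in $U\cap B_1$ near $\spt\chi_A$. Next, cut off near the origin with $\eta_\rho$ as in Lemma~\ref{lem:calibration}. The bound $|Y|\le C|x|$ from \eqref{eq:Y-bound} makes that error $O(\rho^{n+1})$ in the bulk. By De Giorgi's theorem,
\[
\int_A\diver(\zeta_\delta Y)\,dx=\int_{\partial^*A}\zeta_\delta\,Y\cdot\nu_A\,d\cH^n .
\]

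Step 2 is the key cancellation. We have $\diver(\zeta_\delta Y)=\zeta_\delta+\theta'(\lambda/\delta)\delta^{-1}\nabla\lambda\cdot Y$. By \eqref{eq:lambda-gradient}, $\nabla\lambda$ is parallel to $\nu_\pm=X$, and by \eqref{eq:Y-tangent} we get $\nabla\lambda\cdot Y=0$. Hence $\diver(\zeta_\delta Y)=\zeta_\delta$ exactly, with no large cutoff term. This is the reason the cutoff is taken in the foliation parameter rather than in $\dist(\cdot,\C)$. Letting $\delta\to0$, the bulk term converges by monotone convergence to $\cL^{n+1}(A)$, since $\lambda>0$ on $U$. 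On the boundary side, $\zeta_\delta\to\chi_U$ pointwise. Also $|Y|\le C$ on $B_1$ and $\cH^n(\partial^*A\cap B_1)<\infty$, so dominated convergence gives $\int_{\partial^*A\cap U}Y\cdot\nu_A$.

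The main obstacle is justifying the origin cutoff and the interchange of limits. If the origin cutoff $\eta_\rho$ is used, the term $\int_A \zeta_\delta\,Y\cdot\nabla\eta_\rho$ is bounded by $C\rho\cdot\rho^{-1}\cdot\cL^{n+1}(B_{2\rho})\to0$. The boundary term, truncated by $\eta_\rho$, converges by dominated convergence. One point needs checking: the level sets $\{\lambda\le 2\delta\}\cap B_1$ need not be compactly contained away from the origin. Using $\eta_\rho$ handles this, because on $\spt\eta_\rho\cap B_1$ the region $\{\lambda\ge\delta\}$ is a compact subset of $U$. Smoothness of $\zeta_\delta\eta_\rho Y$ there follows since $\lambda$ is smooth on $U$. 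So I expect everything to close, and the only delicate point is the exact vanishing in Step 2.
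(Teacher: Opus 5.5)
Your proposal is correct and is essentially the paper's argument. Both proofs truncate $A$ along the foliation parameter $\lambda$ and use $Y\cdot\nabla\lambda=0$, so the truncation contributes no flux. The only difference is technical: the paper cuts sharply at a level $s$ with $\cH^n(\partial^*A\cap\{\lambda=s\})=0$ and decomposes $\partial^*(A\cap\{\lambda>s\})$ via Maggi's Theorem~16.3, whereas you use the smooth cutoff $\theta(\lambda/\delta)$ and the plain Gauss--Green formula.

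Your extra origin cutoff $\eta_\rho$ is harmless but unnecessary. Since $\dist(0,\Sigma)=1$ gives $|x|\geq\lambda(x)$, the set $\{\lambda\geq\delta\}\cap\overline B_1$ is already a compact subset of $U$ at distance at least $\delta$ from the origin.
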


\begin{proof}[Proof of Lemma~\ref{lem:volume-flux}]
For simplicity, we write $U=U_\pm$, $\Sigma=\Sigma_\pm$, and
$\lambda=\lambda_\pm$.  By \eqref{eq:lambda-def}, $\lambda$ is smooth
without critical points and $\{\lambda=s\}=s\Sigma$ for every $s>0$.
Equations
\eqref{eq:lambda-gradient} and \eqref{eq:X-def} also show that
$\nabla\lambda$ is parallel to $X$.

For $s>0$, set
$$
    H_s=\{\lambda>s\},
    \qquad
    A_s=A\cap H_s,
$$
so $\partial H_s=s\Sigma$ and $
    \spt\chi_{A_s}
    \subset\overline B_1\cap\{\lambda\geq s\}\Subset U.$

Next, we choose a regular level of the foliation parameter.
Since
$\cH^n(\partial^*A\cap B_1)<\infty$, all but at most countably many
$s>0$ satisfy
\begin{equation}\label{eq:good-slice}
    \cH^n(\partial^*A\cap\{\lambda=s\})=0.
\end{equation}
Fix such an $s$.  By
\cite[Theorem~16.3]{maggi2012sets},
$A_s$ has finite perimeter and, up to an $\cH^n$-null set,
\begin{equation}\label{eq:As-boundary-decomposition}
    \partial^*A_s
    =\bigl(\partial^*A\cap H_s\bigr)
      \sqcup
      \bigl(A^{(1)}\cap\partial H_s\bigr).
\end{equation}
Hence on the
first part,
$\nu_{A_s}=\nu_A$, and on the second part,
\begin{equation}\label{eq:slice-normal}
    \nu_{A_s}=\nu_{H_s}
    =-\frac{\nabla\lambda}{|\nabla\lambda|},
\end{equation}
which is parallel to $X$.

Because $\diver Y=1$, we have
\begin{align}
    \cL^{n+1}(A_s)
    &=\int_{A_s}\diver(Y)\,dx \nonumber\\
    &=\int_{\partial^*A_s}
      Y\cdot\nu_{A_s}\,d\cH^n \nonumber\\
    &=\int_{\partial^*A\cap\{\lambda>s\}}
      Y\cdot\nu_{A_s}\,d\cH^n, \label{eq:truncated-volume-flux}
\end{align}
 where the last equality uses \eqref{eq:slice-normal}. 

Finally, choose regular values $s_j\downarrow0$. Monotone convergence gives
$$
    \cL^{n+1}(A\cap\{\lambda>s_j\})
    \longrightarrow \cL^{n+1}(A).
$$
On the boundary side, $\mathbf1_{\{\lambda>s_j\}}Y\cdot\nu_A
$ converges pointwise on $\partial^*A\cap U$ to $Y\cdot\nu_A$. Moreover,
$|Y(x)|\leq C_{\C}|x|\leq C_{\C}$ on $B_1$, while
$\cH^n(\partial^*A\cap B_1)<\infty$.  By the dominated convergence theorem,
\eqref{eq:truncated-volume-flux} therefore proves
\eqref{eq:volume-flux}.
\end{proof}

This completes the proof of Theorem~\ref{thm:main}.

\section{The weighted quantitative characterization}
\label{sec:weighted-characterization}

In this section, we use the foliations to construct the quantitative subcalibration needed for the sufficiency direction of Theorem~\ref{thm:weighted-characterization}.

\subsection*{A weighted field on \texorpdfstring{$\Sigma_\pm$}{Sigma plus/minus}}
Here and below, if
$Z\in C^1(\Sigma_\pm;T\Sigma_\pm)$, we regard $Z$ as an
$\R^{n+1}$-valued map on $\Sigma_\pm$ and write
\[
    dZ_z:T_z\Sigma_\pm\longrightarrow\R^{n+1},
    \qquad
    dZ_z[\xi]
    :=
    D_\xi^{\R^{n+1}}Z.
\]

\begin{lemma}
\label{lem:weighted-leaf-field}
Assume that $\C$ is strictly stable and strictly minimizing.  For each
sign there is a smooth tangent field
$Z_\pm^{\mathrm w}$ on $\Sigma_\pm$ such that
\begin{equation}\label{eq:weighted-leaf-divergence}
    \diver_{\Sigma_\pm}(\psi_\pm Z_\pm^{\mathrm w})
    =\frac{\psi_\pm^2}{|z|^2}.
\end{equation}
Moreover,
\begin{equation}\label{eq:weighted-leaf-estimates}
    |Z_\pm^{\mathrm w}(z)|
    \leq C_\C\frac{\psi_\pm(z)}{|z|},
    \qquad
    |dZ_\pm^{\mathrm w}(z)|
    \leq C_\C\frac{\psi_\pm(z)}{|z|^2}.
\end{equation}
\end{lemma}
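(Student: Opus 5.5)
We will follow the scheme of Lemma~\ref{lem:leaf-Z}: solve the weighted divergence equation by radial integration on the graphical end, then correct on the compact part by a compactly supported field obtained from de Rham cohomology. Fix one side and drop the sign. Strict stability and strict minimality enter through Proposition~\ref{prop:HS-root-criterion}: both leaves have the slow exponent $\gamma=\gamma_+=-\frac{n-2}{2}+b_1$ with $b_1>0$, so \eqref{eq:slow-weighted-C2} holds. By \eqref{eq:HS-psi-comparison} and \eqref{eq:J-two-sided}, on the end we then have
\[
  \widehat\psi\simeq r^{\gamma},\qquad
  \widehat\psi^2 J/r^2\simeq r^{n-3+2\gamma}=r^{-1+2b_1}.
\]

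On the end $r\geq R_1$ the plan is to define $Z_0=\beta\,\partial_r$, where
\[
  \beta(r,\omega)=\frac{a+\int_{R_1}^r \widehat\psi^2 J\,|G|^{-2}(s,\omega)\,ds}{\widehat\psi(r,\omega)J(r,\omega)}.
\]
Then \eqref{eq:end-radial-divergence} gives $\diver_\Sigma(\psi Z_0)=\psi^2/|z|^2$ on the end. The exponent $-1+2b_1$ is strictly larger than $-1$, and this is exactly where $b_1>0$ is used. The integral is therefore $\lesssim r^{2b_1}$ and dominates the constant $a$ for large $r$, giving
\[
  |\beta|\lesssim \frac{r^{2b_1}}{r^{\gamma+n-1}}=r^{\gamma-1}\simeq \frac{\widehat\psi}{r}.
\]
This yields the first estimate in \eqref{eq:weighted-leaf-estimates}.

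The fast exponent $\gamma_-$ would instead give a convergent integral and $|\beta|\sim r^{1-n-\gamma_-}\gg \psi/r$. This is why strict minimality, via Proposition~\ref{prop:HS-root-criterion}, is needed. Note also that in the unstrict case the integral would give $\log r$, which is too large; this is the reason strict stability is needed.

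The main obstacle is the derivative bound $|dZ^{\mathrm w}|\lesssim \psi/|z|^2$. We will differentiate $\beta$ in $r$ and in $\omega$. For this we need $C^1$ control of $\widehat\psi$ and $J$: namely $\partial_r\widehat\psi=O(r^{\gamma-1})$, $\nabla_\omega\widehat\psi=O(r^\gamma)$, and $\partial_rJ=O(r^{n-2})$. These follow from the $O_2$ expansion \eqref{eq:slow-weighted-C2}, using that $\psi=z\cdot\nu$ is expressed through $h$, $\partial h$, and $w_1>0$, with leading term $\widehat\psi\approx c\,a\,r^\gamma w_1$. Put $N(r,\omega)$ for the numerator of $\beta$. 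Its $\omega$-derivatives are bounded by $r^{2b_1}$ because $|\nabla_\omega(\widehat\psi^2J/|G|^2)|\lesssim s^{-1+2b_1}$. Also $\partial_r N=\widehat\psi^2J/|G|^2$. Together these give
\[
  |\partial_r\beta|\lesssim r^{\gamma-2},\qquad |\nabla_\omega\beta|\lesssim r^{\gamma-1}.
\]
Since $|\nabla_\omega|/r$ is the tangential derivative, both terms are $\lesssim \psi/r^2$. Finally $d(\partial_r G)=O(1/r)$, so its contribution $|\beta|\,|d\partial_r G|$ is also $O(r^{\gamma-2})$.

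Finally we correct on the compact part. Extend $Z_0$ by a cutoff to $\widetilde Z$ on $\Sigma$ and set $f=\psi^2/|z|^2-\diver_\Sigma(\psi\widetilde Z)\in C_c^\infty(\Sigma)$. We choose the constant $a$ as in \eqref{eq:flux-compatibility} but with $\int_{K_0}\psi^2/|z|^2$, so that the same flux computation gives $\int_\Sigma f=0$. As in Lemma~\ref{lem:leaf-Z}, compactly supported cohomology then produces $W\in C_c^\infty(T\Sigma)$ with $\diver_\Sigma W=f$. Setting $Z^{\mathrm w}=\widetilde Z+W/\psi$ solves \eqref{eq:weighted-leaf-divergence}. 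The estimates \eqref{eq:weighted-leaf-estimates} hold on the end by the above, and on the remaining compact set because there $\psi$ and $|z|$ are bounded above and below by positive constants.
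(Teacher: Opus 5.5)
Your proposal is correct and follows essentially the same route as the paper. Both arguments use radial integration on the graphical end with flux constant $a=\cH^{n-1}(S)^{-1}\int_{K_0}\psi^2/|z|^2$. They both derive the bounds $|\beta|+r|\partial_r\beta|+|\nabla_S\beta|\lesssim r^{\gamma-1}$ from the slow-exponent $O_2$ asymptotics of Proposition~\ref{prop:HS-root-criterion}. They finish with the same compactly supported cohomology correction as in Lemma~\ref{lem:leaf-Z}. Where the paper only says ``a direct computation,'' you supply more detail: the $C^1$ control of $\widehat\psi$ and $J$, and why both $b_1>0$ and the slow root are needed.
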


\begin{proof}
We omit the sign and denote
$\gamma=\gamma_+=-(n-2)/2+b_1$.  
Define the smooth positive function
\begin{equation}\label{eq:leaf-smooth-weight}
    \rho(z)=\frac{\psi(z)}{|z|^2}.
\end{equation}
Hence by \eqref{eq:HS-psi-comparison},
\begin{equation}\label{eq:rho-distance-comparison}
    c_\C\frac{\dist(z,\C)}{|z|^2}
    \leq\rho(z)
    \leq C_\C\frac{\dist(z,\C)}{|z|^2}
    \qquad\text{on }\Sigma.
\end{equation}
We use $\rho$ rather than the distance function itself, so
that the right-hand side of \eqref{eq:weighted-leaf-divergence} is smooth.

Write
$\widehat\psi=\psi\circ G$ and use $R_1$ from
Proposition~\ref{prop:HS-root-criterion}.  
Denote
\[
    K_0=\Sigma\setminus G((R_1,\infty)\times S),
\]
and
\begin{equation}\label{eq:weighted-flux-constant}
    a_0
    =\frac{1}{\cH^{n-1}(S)}
      \int_{K_0}\psi\rho\,d\cH^n>0.
\end{equation}
Then we define
\[
    H(r,\omega)
    =
    a_0+
    \int_{R_1}^r
      \frac{\widehat\psi(s,\omega)^2J(s,\omega)}
           {|G(s,\omega)|^2}\,ds
\]
and set
\begin{equation}
    \beta(r,\omega)
    =
    \frac{H(r,\omega)}
         {\widehat\psi(r,\omega)J(r,\omega)},
    \qquad
    Z_0=\beta\,\partial_rG.
    \label{eq:weighted-beta-def}
\end{equation}
Thus the radial divergence formula \eqref{eq:end-radial-divergence} gives the
exact identity
\begin{equation}\label{eq:weighted-end-equation}
\begin{aligned}
    \diver_\Sigma(\psi Z_0)
    &=\frac1J\partial_r(J\widehat\psi\beta)\\
    &=\frac1J\partial_rH\\
    &=\frac{\widehat\psi^2}{|G|^2}
      =\psi\rho.
\end{aligned}
\end{equation}

Note that by \eqref{eq:HS-psi-comparison} and Proposition~\ref{prop:HS-root-criterion}, for $(r,\omega)\in[R_1,\infty)\times S$,
\begin{equation}\label{eq:weighted-H-estimates}
    c r^{2b_1}\leq H(r,\omega)\leq C r^{2b_1},
    \qquad
    |\nabla_SH(r,\omega)|\leq Cr^{2b_1}, \qquad \partial_rH
    =
    \frac{\widehat\psi^2J}{|G|^2}.
\end{equation}
This is the precise point at which strict stability is used. Then a direct computation shows that
\begin{equation}\label{eq:weighted-beta-estimates}
    |\beta|+r|\partial_r\beta|+|\nabla_S\beta|
    \leq Cr^{\gamma-1}.
\end{equation}
The weighted $C^2$ estimate \eqref{eq:slow-weighted-C2} thus gives
$|\partial_rG|\leq C$ and
$|d(\partial_rG)|\leq C/r$. Hence by the product rule,
\begin{equation}\label{eq:weighted-Z-end-estimates}
    |Z_0|+r|dZ_0|
    \leq Cr^{\gamma-1}.
\end{equation}

It remains to extend the field across the compact region. This follows by exactly the same argument as in the proof of Lemma~\ref{lem:leaf-Z}, so we omit the details.
\end{proof}
\subsection*{The quantitative subcalibration}
Let $Z\in C^1(\Sigma_\pm;T\Sigma_\pm)$ and define the degree-zero field
$W$ by $W(tz)=Z(z)$.  In the coordinates of $\Phi_\pm$, the field $W$
has components $(0,t^{-1}Z)$.  Since the coordinate volume density is
$t^n\psi_\pm$ by \eqref{eq:Phi-Jacobian},
\begin{equation}\label{eq:zero-homogeneous-field-divergence}
\begin{aligned}
    (\diver W)(tz)
    &=\frac{1}{t^n\psi_\pm(z)}
      \diver_{\Sigma_\pm}
      \bigl(t^{n-1}\psi_\pm Z\bigr)(z)\\
    &=\frac{1}{t\psi_\pm(z)}
      \diver_{\Sigma_\pm}(\psi_\pm Z)(z).
\end{aligned}
\end{equation}

Next we compute the Euclidean derivative of \(W\).  Since \(W\) is
defined through the foliation coordinates \(\Phi_\pm\), we first
express an arbitrary ambient direction in these coordinates.
Fix \(x=tz\) and \(v\in\mathbb R^{n+1}\).  Let \(v^\top\) denote the
orthogonal projection of \(v\) onto \(T_z\Sigma_\pm\), and put
\[
    c(z)=z\cdot\nu_\pm(z),
    \qquad
    z^\top=z-c(z)\nu_\pm(z).
\]
Since \(c(z)\neq0\), the inverse of
\eqref{eq:Phi-differential} is
\[
    (D\Phi_\pm)_{(t,z)}^{-1}v
    =
    \left(
      \frac{v\cdot\nu_\pm(z)}{c(z)},
      \frac1t
      \left(
        v^\top-
        \frac{v\cdot\nu_\pm(z)}{c(z)}z^\top
      \right)
    \right).
\]

Since
\[
    W\circ\Phi_\pm(t,z)=Z(z),
\]
the field \(W\circ\Phi_\pm\) is independent of \(t\).  The chain rule
therefore gives
\begin{equation}\label{eq:zero-homogeneous-field-derivative}
    DW_{tz}[v]
    =
    \frac1t
    dZ_z
    \left(
      v^\top-
      \frac{v\cdot\nu_\pm(z)}{c(z)}z^\top
    \right).
\end{equation}

\begin{lemma}
\label{lem:degree-zero-weighted-field}
Under the assumptions of Lemma~\ref{lem:weighted-leaf-field}, define
\begin{equation}\label{eq:degree-zero-lift}
    W_\pm(tz)=Z_\pm^{\mathrm w}(z),
    \qquad t>0,\quad z\in\Sigma_\pm.
\end{equation}
Then
\begin{align}
    W_\pm\cdot X&=0,
      \label{eq:degree-zero-tangent}\\
    c_\C q_\C(x)
    \leq\diver W_\pm(x)
    &\leq C_\C q_\C(x),
      \label{eq:degree-zero-divergence}\\
    |W_\pm(x)|
    &\leq C_\C\frac{\dist(x,\C)}{|x|},
      \label{eq:degree-zero-size}\\
    |DW_\pm(x)|
    &\leq\frac{C_\C}{|x|}.
      \label{eq:degree-zero-derivative}
\end{align}
Moreover, $W_\pm(x)\to0$ as $x\in U_\pm$ approaches
$\C\setminus\{0\}$.
\end{lemma}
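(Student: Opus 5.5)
The plan is to read off every assertion from the structure of the degree-zero lift, using \eqref{eq:zero-homogeneous-field-divergence}, \eqref{eq:zero-homogeneous-field-derivative}, the leaf estimates \eqref{eq:weighted-leaf-estimates} from Lemma~\ref{lem:weighted-leaf-field}, and the comparison \eqref{eq:rho-distance-comparison} between $\psi/|z|^2$ and $\dist(z,\C)/|z|^2$. All constants are uniform because of dilation invariance.

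\emph{Tangency and divergence.} Tangency \eqref{eq:degree-zero-tangent} is immediate. Indeed, $Z^{\mathrm w}_\pm(z)\in T_z\Sigma_\pm=T_{tz}S_{\pm t}$ by \eqref{eq:scaled-leaf-geometry}, and $X(tz)=\nu_\pm(z)$ by the definition of $X$. For the divergence we combine \eqref{eq:zero-homogeneous-field-divergence} with \eqref{eq:weighted-leaf-divergence}. This gives
\[
    \diver W_\pm(tz)=\frac{1}{t\psi_\pm(z)}\frac{\psi_\pm(z)^2}{|z|^2}=\frac{\psi_\pm(z)}{t|z|^2}.
\]
By \eqref{eq:rho-distance-comparison} the right-hand side is comparable to $t^{-1}q_\C(z)$. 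By the homogeneity \eqref{eq:weight-homogeneity}, this equals $q_\C(tz)$, which proves \eqref{eq:degree-zero-divergence}.

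\emph{Size estimate.} Using \eqref{eq:weighted-leaf-estimates} and \eqref{eq:rho-distance-comparison},
\[
    |W_\pm(tz)|=|Z^{\mathrm w}_\pm(z)|\le C\frac{\psi_\pm(z)}{|z|}\le C\frac{\dist(z,\C)}{|z|}.
\]
The last quotient is scale invariant, so it equals $\dist(tz,\C)/|tz|$. This gives \eqref{eq:degree-zero-size}. The vanishing of $W_\pm$ at $\C\setminus\{0\}$ follows from the same bound, since $\dist(x,\C)/|x|\to0$ as $x$ approaches a point of $\C\setminus\{0\}$.

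\emph{Derivative estimate, the main obstacle.} In \eqref{eq:zero-homogeneous-field-derivative} the vector $z^\top/c(z)$ is large. On the graphical end, $|z^\top|\sim|z|$ while $c(z)=\pm\psi_\pm(z)$ decays like $r^{\gamma}$. Hence
\[
    |DW_{tz}[v]|\le\frac{|v|}{t}\,|dZ^{\mathrm w}_z|\left(1+\frac{|z|}{\psi_\pm(z)}\right).
\]
Inserting $|dZ^{\mathrm w}_z|\le C\psi_\pm(z)/|z|^2$ from \eqref{eq:weighted-leaf-estimates} yields
\[
    |DW_{tz}|\le \frac{C}{t}\left(\frac{\psi_\pm}{|z|^2}+\frac1{|z|}\right)\le \frac{C}{t|z|}=\frac{C}{|tz|}.
\]
Here we used $\psi_\pm\le|z|$, which holds because $|\psi_\pm(z)|=|z\cdot\nu_\pm|\le|z|$. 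The factor $\psi_\pm$ in the derivative bound of Lemma~\ref{lem:weighted-leaf-field} cancels exactly the $1/c(z)$ blow-up, and this is where the precise form of that lemma (and hence strict stability and minimality) enters. On the compact part of $\Sigma_\pm$, $c$ is bounded below and $|z|$ is bounded above and below, so the bound holds trivially there after enlarging $C_\C$. This proves \eqref{eq:degree-zero-derivative}.
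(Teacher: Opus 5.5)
Your proof is correct and follows the paper's argument. The divergence and size bounds come from the degree-zero divergence formula, the leaf estimates, the $\psi$--distance comparison and homogeneity. The derivative bound comes from the chain-rule formula, where the factor $\psi_\pm$ in $|dZ^{\mathrm w}_\pm|$ cancels the $1/c(z)$ blow-up.

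The only difference is cosmetic. You write the derivative estimate in ratio form, using $|z^\top|\le|z|$ and $\psi_\pm\le|z|$, so it holds on all of $\Sigma_\pm$ at once and your compact-part remark is not needed. The paper instead uses the explicit rates $\psi_\pm\ge cr^{\gamma}$ and $|dZ^{\mathrm w}_\pm|\le Cr^{\gamma-2}$ on the end and treats the compact part separately.
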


\begin{proof}
Put $\gamma=\gamma_+=-(n-2)/2+b_1$.
 By the degree-zero equation
\eqref{eq:zero-homogeneous-field-divergence} and
\eqref{eq:weighted-leaf-divergence}, 
\begin{equation}\label{eq:degree-zero-divergence-exact}
    (\diver W_\pm)(tz)
    =\frac1t\frac{\psi_\pm(z)}{|z|^2}.
\end{equation}
Since $\dist(tz,\C)=t\dist(z,\C),$
the comparison \eqref{eq:HS-psi-comparison} proves
\eqref{eq:degree-zero-divergence}.  The same comparison and
\eqref{eq:weighted-leaf-estimates} give
\[
    |W_\pm(tz)|
    \leq C\frac{\dist(z,\C)}{|z|}
    =C\frac{\dist(tz,\C)}{|tz|},
\]
which proves \eqref{eq:degree-zero-size} and shows that $W_\pm$
extends continuously to $\C\setminus\{0\}$ by setting $W_\pm=0$
there.

We next include the derivative estimate. For
$v\in\R^{n+1}$, we have
\[
    DW_{\pm,tz}[v]
    =\frac1tdZ_\pm^{\mathrm w}
      \left(
        v^\top-\frac{v\cdot\nu_\pm}{c(z)}z^\top
      \right).
\]
On the end, $|z^\top|\leq Cr$, $|c(z)|=\psi_\pm(z)\geq cr^\gamma$,
and
$|dZ_\pm^{\mathrm w}|
\leq Cr^{\gamma-2}$.  Hence
\[
    |DW_{\pm,tz}[v]|
    \leq\frac Ct r^{\gamma-2}(1+r^{1-\gamma})|v|
    \leq\frac C{tr}|v|
    \leq\frac C{|tz|}|v|.
\]
On the compact part of $\Sigma_\pm$, the factor
$1+|z^\top|/\psi_\pm$ is bounded and
$t^{-1}=|z|/|tz|$.  Hence combining together we prove
\eqref{eq:degree-zero-derivative} globally.
\end{proof}

\begin{lemma}
\label{lem:weighted-subcalibration}
Assume that $\C$ is strictly stable and strictly minimizing.  There is
\[
    \mathcal X
    \in C^0(\R^{n+1}\setminus\{0\};\mathbb S^n),
\]
smooth in $U_+\cup U_-$, such that
\begin{align}
    \mathcal X&=\nu_E
      &&\text{on }\C\setminus\{0\},
      \label{eq:weighted-subcalibration-trace}\\
    \diver\mathcal X&\geq c_\C q_\C
      &&\text{in }U_+,
      \label{eq:weighted-subcalibration-plus}\\
    \diver\mathcal X&\leq-c_\C q_\C
      &&\text{in }U_-,
      \label{eq:weighted-subcalibration-minus}\\
       |\diver \mathcal X|
    &\leq C_\C q_\C
      &&\text{in }U_+\cup U_-.
      \label{eq:weighted-subcalibration-absolute}
\end{align}
Moreover,
\begin{equation}
\label{eq:weighted-subcalibration-distributional-divergence}
    \diver \mathcal X
    =
    g
    \qquad\text{in }\mathcal D'(\R^{n+1}),
\end{equation}
where
\[
    g(x)
    =
    \begin{cases}
    \diver \mathcal X,
       &x\in U_+\cup U_-,\\[2ex]
    0,
       &x\in\C,
    \end{cases}
\]
and $g
    \in
    C^0(\R^{n+1}\setminus\{0\})
    \cap
    L^1_{\mathrm{loc}}(\R^{n+1}).$
\end{lemma}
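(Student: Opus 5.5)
The plan is to perturb the foliation calibration $X$ of Lemma~\ref{lem:calibration} by a small multiple of the degree-zero tangential fields $W_\pm$ from Lemma~\ref{lem:degree-zero-weighted-field}, and then renormalize. Fix a small $\delta>0$ and set, on $U_\pm$,
\[
    \widetilde{\mathcal X}=X\pm\delta W_\pm,
    \qquad
    \mathcal X=\frac{\widetilde{\mathcal X}}{|\widetilde{\mathcal X}|},
\]
with $\mathcal X=\nu_E$ on $\C\setminus\{0\}$. By \eqref{eq:degree-zero-tangent}, $|\widetilde{\mathcal X}|^2=1+\delta^2|W_\pm|^2$, so $\mathcal X=(1+\delta^2|W_\pm|^2)^{-1/2}(X\pm\delta W_\pm)$ is smooth and unit on $U_\pm$. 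Continuity across $\C\setminus\{0\}$ follows from the continuity of $X$ (Lemma~\ref{lem:calibration}) together with $W_\pm\to0$ at $\C\setminus\{0\}$; this gives \eqref{eq:weighted-subcalibration-trace}.

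Next I compute the divergence in $U_\pm$. Write $\phi=(1+\delta^2|W|^2)^{-1/2}$. Since $\diver X=0$,
\[
    \diver\mathcal X=\pm\delta\phi\,\diver W+\nabla\phi\cdot(X\pm\delta W).
\]
I bound the error term by $|\nabla\phi|\le C\delta^2|W||DW|$. Using \eqref{eq:degree-zero-size} and \eqref{eq:degree-zero-derivative}, this gives $|\nabla\phi|\le C\delta^2\dist(x,\C)/|x|^2=C\delta^2q_\C$. The main term satisfies $c\delta q_\C\le\delta\phi\diver W\le C\delta q_\C$ by \eqref{eq:degree-zero-divergence}, since $\phi\in[c,1]$ ($|W|$ is bounded because $\dist(x,\C)\le|x|$). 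Choosing $\delta$ small therefore yields \eqref{eq:weighted-subcalibration-plus}, \eqref{eq:weighted-subcalibration-minus} and \eqref{eq:weighted-subcalibration-absolute}. The sign conventions need care: the orientation must make the divergence positive in $U_+$ and negative in $U_-$, which fixes the choice of $\pm\delta$.

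For the distributional statement \eqref{eq:weighted-subcalibration-distributional-divergence}, note that $g$ is continuous away from $0$: $q_\C$ vanishes on $\C$ and $|g|\le Cq_\C$. Also $g\in L^1_{\mathrm{loc}}$ since $q_\C\le|x|^{-1}$. Away from the origin, I argue as in Lemma~\ref{lem:calibration}. I integrate $\mathcal X\cdot\nabla\varphi$ over inner exhaustions of $U_\pm$ and pass to the limit. The boundary terms cancel on $\C$ because $\mathcal X$ is continuous with trace $\nu_E$ from both sides, and the bulk terms converge by dominated convergence. To include the origin, I use the cutoff $\eta_\rho$. The error is bounded by $\int_{B_{2\rho}}|\nabla\eta_\rho|\le C\rho^{n}\to0$, since $|\mathcal X|=1$ and $n\ge2$, and the $g$-term converges by the $L^1_{\mathrm{loc}}$ bound.

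The main obstacle I expect is the exhaustion argument near $\C$. The derivative of $\mathcal X$ may blow up there, since $DW$ is only bounded by $C/|x|$ rather than tending to zero. Still, $\mathcal X$ is continuous up to $\C\setminus\{0\}$ and its divergence is integrable, and these two facts suffice for the divergence theorem on approximating domains. I would pass to the limit using level sets $\{\lambda_\pm=s\}$ as $s\to\infty$, or tubular neighborhoods of $\C$, in place of the exhaustions.
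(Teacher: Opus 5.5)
Your proposal is correct and follows the paper's proof essentially step for step. It uses the same normalized field $(X\pm\delta W_\pm)/\sqrt{1+\delta^2|W_\pm|^2}$, the same divergence formula with the normalization error bounded by $C\delta^2 q_\C$ via $|\nabla|W_\pm|^2|\le 2|W_\pm||DW_\pm|$, and the same exhaustion-plus-cutoff (capacity) argument for the distributional divergence; one small slip is that, since $\{\lambda_\pm=s\}=s\Sigma_\pm$, the regions $\{\lambda_\pm>s\}$ exhaust $U_\pm$ and the leaves approach $\C$ as $s\downarrow0$, not as $s\to\infty$.
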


\begin{proof}
Fix $\delta>0$, to be chosen sufficiently small, and define
\begin{equation}\label{eq:weighted-subcalibration-def}
    \mathcal X(x)
    =
    \begin{cases}
    \displaystyle
    \frac{X(x)+\delta W_+(x)}
         {\sqrt{1+\delta^2|W_+(x)|^2}},
       &x\in U_+,\\[2ex]
    \displaystyle
    \frac{X(x)-\delta W_-(x)}
         {\sqrt{1+\delta^2|W_-(x)|^2}},
       &x\in U_-,\\[2ex]
    \nu_E(x),
       &x\in\C\setminus\{0\}.
    \end{cases}
\end{equation}

Because $W_\pm\cdot X=0$, the field $\mathcal X$ has unit length.
Moreover, for every $x\in\C\setminus\{0\}$, the continuous extensions
of $X$ and $W_\pm$ satisfy
\[
    X(x)=\nu_E(x),
    \qquad
    W_\pm(x)=0.
\]
Consequently,
\[
    \lim_{\substack{y\to x\\y\in U_+}}\mathcal X(y)
    =
    \lim_{\substack{y\to x\\y\in U_-}}\mathcal X(y)
    =
    \nu_E(x).
\]
Thus $\mathcal X$ extends continuously across
$\C\setminus\{0\}$, and
\eqref{eq:weighted-subcalibration-trace} holds.

Let
\[
    A_\pm
    =
    (1+\delta^2|W_\pm|^2)^{-1/2}.
\]
Since $\diver X=0$ on $U_+\cup U_-$, we have
\begin{align}
    \diver(\mathcal X)
    &=
    \pm\delta A_\pm\diver W_\pm\notag\\
    &\quad-
    \frac{\delta^2}{2}A_\pm^3
    (X\pm\delta W_\pm)\cdot\nabla|W_\pm|^2.
    \label{eq:normalized-divergence}
\end{align}
By \eqref{eq:degree-zero-size} and
\eqref{eq:degree-zero-derivative},
\begin{equation}\label{eq:normalization-error}
    |\nabla|W_\pm|^2|
    \leq
    2|W_\pm||DW_\pm|
    \leq
    C_\C q_\C.
\end{equation}
Also $|W_\pm|\leq C_\C$.  The first term in
\eqref{eq:normalized-divergence} has size at least
$c\delta q_\C$ with the indicated sign, while the error term is bounded
in absolute value by $C\delta^2q_\C$.  Choosing $\delta$ sufficiently
small we get
\eqref{eq:weighted-subcalibration-plus} and
\eqref{eq:weighted-subcalibration-minus}.  And combining
with the upper bound in \eqref{eq:degree-zero-divergence}, we prove
\eqref{eq:weighted-subcalibration-absolute}.

It remains to prove
\eqref{eq:weighted-subcalibration-distributional-divergence}.
First let
\[
    \varphi
    \in
    C_c^\infty(\R^{n+1}\setminus\{0\}).
\]
As in Lemma~\ref{lem:calibration}, we first apply the divergence theorem
to smooth exhaustions of \(U_+\) and \(U_-\).  For every
\(\varphi\in C_c^\infty(\R^{n+1}\setminus\{0\})\), this gives
\begin{align}
    -\int_{\R^{n+1}}
      \mathcal X\cdot\nabla\varphi\,dx
    &=
      \int_{\R^{n+1}}g\varphi\,dx +
      \int_\C\varphi
      (\mathcal X|_\C-\mathcal X|_\C)\cdot\nu_E
      \,d\cH^n.
      \label{eq:punctured-distributional-divergence}
\end{align}
 Hence
\[
    -\int_{\R^{n+1}}
      \mathcal X\cdot\nabla\varphi\,dx
    =
    \int_{\R^{n+1}}g\varphi\,dx.
\]

Now let
$\varphi\in C_c^\infty(\R^{n+1})$ be arbitrary.  Choose a smooth cutoff
$\eta_\rho$ which vanishes on $B_\rho$, equals one outside
$B_{2\rho}$, and satisfies $
    |\nabla\eta_\rho|
    \leq
    \frac C\rho.$
Applying \eqref{eq:punctured-distributional-divergence} to
$\eta_\rho\varphi$, then a standard capacity argument proves
\eqref{eq:weighted-subcalibration-distributional-divergence}.
\end{proof}

Together with the argument in
Subsection~\ref{subsec:proof-weighted-characterization}, this completes
the proof of Theorem~\ref{thm:weighted-characterization}.

{\small
\bibliographystyle{alpha}
\bibliography{main}
}

\bigskip

\begingroup
\small
\normalfont
\noindent
Department of Mathematics, University of Rochester\\
Rochester, New York\\
Email address:
\href{mailto:gniu3@ur.rochester.edu}
     {\texttt{gniu3@ur.rochester.edu}}
\par
\endgroup
\end{document}